\newtheorem{thm}{Theorem}[section]
\newtheorem{lem}[thm]{Lemma}
\theoremstyle{definition}
\newtheorem{defn}{Definition}[section]
\newtheorem{rem}{Remark}[section]
\numberwithin{equation}{section}
\newcommand{\be}{\begin{equation}}
\newcommand{\ee}{\end{equation}}
\begin{document}
\title[nonlinear Keller-Segel equation with singular sensitivity]
{Global generalized solutions to a nonlinear Keller-Segel equation with singular sensitivity}%
\author[Yan]{Jianlu Yan}%
\address{Department of Mathematics, Southeast University, Nanjing 210096, P. R. China}
\email{230159430@seu.edu.cn}
\author[Li]{Yuxiang Li}%
\address{Department of Mathematics, Southeast University, Nanjing 210096, P. R. China}
\email{lieyx@seu.edu.cn}
\thanks{Supported in part by National Natural Science Foundation of China (No. 11171063, No. 11671079, No. 11701290), National Natural Science Foundation of China under Grant (No. 11601127), and National Natural Science Foundation of Jiangsu Provience (No. BK20170896).}

\subjclass[2010]{35K55, 35A01, 35Q92, 92C17.}%
\keywords{nonlinear Keller-Segel equation, global generalized solutions, singular sensitivity.}

\begin{abstract}
We consider the chemotaxis system
\begin{eqnarray*}
\begin{cases}
\begin{array}{lll}
\medskip
u_t =\Delta u^m - \nabla(\frac{u}{v}\nabla v),&{} x\in\Omega,\ t>0,\\
\medskip
v_t =\Delta v -uv,&{}x\in\Omega,\ t>0,\\
\medskip
\frac{\partial u}{\partial \nu}=\frac{\partial v}{\partial\nu}=0,&{}x\in\partial\Omega,\ t>0,\\
\medskip
u(x,0)=u_0(x),\ v(x,0)=v_0(x), &{}x\in\Omega,
\end{array}
\end{cases}
\end{eqnarray*}
in a smooth bounded domain $\Omega\subset \mathbb{R}^n$, $n\geq2$. In this work it is shown that for all reasonably regular initial data $u_0\geq0$ and $v_0>0$, the corresponding Neumann initial-boundary value problem possesses a global generalized solution provided that $m>1+\frac{n-2}{2n}$.
\end{abstract}
\maketitle
\section{Introduction}
In this paper, we consider the following chemotaxis system with singular sensitivity:
\begin{eqnarray}\label{e11}
\begin{cases}
\begin{array}{lll}
\medskip
u_t =\Delta u^m - \nabla(\frac{u}{v}\nabla v),&{} x\in\Omega,\ t>0,\\
\medskip
v_t =\Delta v -uv,&{}x\in\Omega,\ t>0,\\
\medskip
\frac{\partial u}{\partial \nu}=\frac{\partial v}{\partial\nu}=0,&{}x\in\partial\Omega,\ t>0,\\
\medskip
u(x,0)=u_0(x),\ v(x,0)=v_0(x), &{}x\in\Omega,
\end{array}
\end{cases}
\end{eqnarray}
where $\Omega\subset \mathbb{R}^n$, $n\geq2$ is a bounded domain with smooth boundary; $u=u(x,t)$ denotes the density of the cells; $v=v(x,t)$ represents the concentration of the oxygen; the initial data $u_0$ and $v_0$ are assumed to be nonnegative functions.

Chemotaxis is the influence of chemical substances in the environment on the movement of mobile species. This can lead to strictly oriented movement or to partially oriented and partially tumbling movement. The movement towards a higher concentration of the chemical substance is termed positive chemotaxis and the movement towards regions of lower chemical concentration is called negative chemotactical movement. Chemotaxis is an important means for cellular communication. Communication by chemical signals determines how cells arrange and organize themselves, like for instance in development or in living tissues\cite{Horstmann&Winkler-JDE-2005}.

System (\ref{e11}) is a well-known chemotaxis model and was initially introduced by Keller and Segel \cite{Keller1970Initiation} to describe the aggregation of cellular slime molds Dictyostelium discoideum. When $m=1$, (\ref{e11}) is a standard Keller-Segel system
\begin{eqnarray}\label{e1.2}
\begin{cases}
\begin{array}{lll}
\medskip
u_t =\Delta u - \chi\nabla(\frac{u}{v}\nabla v),&{} x\in\Omega,\ t>0,\\
\medskip
v_t =\Delta v -uv,&{}x\in\Omega,\ t>0,\\
\end{array}
\end{cases}
\end{eqnarray}
where the second equation models consumption of the signal upon contact with cells, and where in the first equation it is assumed that the chemotactic stimulus is perceived in accordance with the Weber-Fechner law, thus requiring the chemotactic sensitivity $S(u,v):=\chi\frac{u}{v}$ to be chosen proportional to the reciprocal
signal density. Indeed, the ability of this particular type of absorption-taxis interplay to generate wave-like solution behavior formed a motivation for several analytical studies on the existence and stability properties of traveling wave solutions to (\ref{e1.2})(see \cite{Jin2013Asymptotic,Tong2009NONLINEAR,TONG2010NONLINEAR}) and also to some closely related systems(see \cite{Meyries2008Local,Nagai1991Traveling,Schwetlick2010Traveling}). The singular chemotactic sensitivities as in (\ref{e1.2}) are very important in biology, which have been underlined independently in modeling approaches (see \cite{Levine1997A,Othmer1997Aggregation,Kalinin2009Logarithmic}) and in tumor angiogenesis (see \cite{Sleeman2001Partial,Levine2000A}) and also in taxis-driven morphogen transport (see \cite{Bollenbach2007Morphogen}).

Recently, Winkler \cite{Winkler2016The} showed that in bounded planar domains $\Omega$ with smooth boundary, for all reasonably regular initial data $u_0>0$ and $v_0>0$, the corresponding Neumann initial-boundary value problem possesses a global generalized solution. In higher-dimensional domains, in \cite{Winkler2018Renomalized} the same author constructed  renormalized solutions in a radially symmetric setting. When the term $\Delta u^m$ is replaced by $\nabla\cdot(D(u)\nabla u)$ in the first equation in (\ref{e11}), Lankeit\cite{Lankeit2016Locally} showed the existence of locally bounded global solutions of the system with $D(u)\geq\delta u^{m-1}$, $\delta>0$, $n\geq 2$, under the condition $m>1+\frac{n}{4}$.

In contrast to this, in the related chemotaxis system
\begin{eqnarray}\label{e}
\begin{cases}
\begin{array}{lll}
\medskip
u_t =\Delta u-\chi\nabla(\frac{u}{v}\nabla v),&{} x\in\Omega,\ t>0,\\
\medskip
v_t =\Delta v-v+u,&{}x\in\Omega,\ t>0,\\
\end{array}
\end{cases}
\end{eqnarray}
$v$ does not stand for a nutrient to be consumed but a signalling substance produced by the bacteria themselves, i.e. the evolution is governed by the singularity in the sensitivity function is mitigated by v tending to stay away from 0 thanks to the production term in the second equation. There have been many authors studying system (\ref{e}). Winkler \cite{Winkler2011Global} proved that if $0<\chi<\sqrt{\frac{2}{n}}$ then there exists a global-in-time classical solution, generalizing a previous result which asserts the same for n = 2 only. Furthermore, it is seen that the range of admissible $\chi$ can be enlarged upon relaxing the solution concept. More precisely, global existence of weak solutions is established whenever $0<\chi<\sqrt{\frac{n+2}{3n-4}}$.
In \cite{Stinner2011Global} Stinner and Winkler proved that in the two-dimensional radially symmetric case certain generalized solutions can be constructed actually without any restriction on the size of $\chi$. Recently, it is proved in Lankeit and Winkler \cite{Lankeit2017A} that the problem possesses a global
generalized solution provided
\begin{equation*}
\chi=\begin{cases}
\infty,&\text{if $n=2$};\\
\sqrt{8},&\text{if $n=3$};\\
\frac{n}{n-2},&\text{if $n\geq 4$}.
\end{cases}
\end{equation*}
When the first equation of (\ref{e}) is replaced by $u_t=\Delta u-\nabla\cdot\left(u\chi(v)\nabla v\right)$, in \cite{Winkler2010Absence}, the chemotactic sensitivity function is assumed to generalize the prototype $\chi(v)=\frac{\chi_0}{(1+\alpha v)^2}$, $v>0$. It is proved that no chemotactic collapse occurs in the sense that for any choice of nonnegative initial data $u(\cdot,0)\in C^0(\bar{\Omega})$ and $v(\cdot,0)\in W^{1,r}$ (with some $r>n$), the corresponding initial-boundary value problem possesses a unique global solution that is uniformly bounded. \cite{FUJIE2014140} presents global existence and boundedness of classical solutions to the fully parabolic chemotaxis system with the strongly singular sensitivity function $\chi(v)$ such that $0<\chi(v)<\frac{\chi_0}{v^k}(\chi_0>0,k>1)$.

Motivated by the above works, the goal of this paper is to deal with global  generalized solvability in the high-dimensional version of system (\ref{e11}) for arbitrary large initial data in an appropriate framework. We now state the main results of this paper.

\textbf{Main results.}
To be precise, we will only assume the initial data $u_0$ and $v_0$ to satisfy
\begin{equation}\label{e12}
\begin{cases}
\begin{aligned}
&u_0\in C^0(\bar{\Omega})\ \text{with}\ u_0\geqslant 0\ \text{in}\ \Omega \ \text{and}\ u_0\not\equiv 0\ \text{as well as}\\
&v_0\in W^{1,\infty}(\Omega)\ \text{with}\ v_0>0\ \text{in}\ \bar{\Omega}
\end{aligned}
\end{cases}
\end{equation}
Under this assumption, we shall firstly see that actually no further requirements on the size of the data are necessary for the construction of certain globally defined generalized solutions.

\begin{thm}\label{main result}
Let $\Omega\subset\mathbb{R}^n$, $n\geq 2$ be a bounded domain with smooth boundary. Then for all $u_0$ and $v_0$ satisfying (\ref{e12}), the problem (\ref{e11}) for $m>1+\frac{n-2}{2n}$ possesses at least one global generalized solution in the sense of Definition \ref{defi21}.
\end{thm}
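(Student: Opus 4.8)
The plan is to obtain $(u,v)$ as a limit of classical solutions $(u_\varepsilon,v_\varepsilon)$ of regularized problems, the whole difficulty lying in producing a short list of a priori bounds that are uniform in $\varepsilon$; the exponent condition $m>1+\frac{n-2}{2n}$ will enter exactly once, in a Gagliardo--Nirenberg balance that makes the central differential inequality close. For $\varepsilon\in(0,1)$ I would study the approximate system obtained from (\ref{e11}) by replacing $\Delta u^m$ with $\Delta(u_\varepsilon+\varepsilon)^m$ (so that the diffusion is uniformly parabolic), keeping $v_{\varepsilon t}=\Delta v_\varepsilon-u_\varepsilon v_\varepsilon$ and the Neumann data, for which standard parabolic theory provides local classical solutions with $u_\varepsilon\ge 0$ together with a suitable extensibility criterion. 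Several estimates are then immediate and $\varepsilon$-independent: the maximum principle gives $0<v_\varepsilon\le\|v_0\|_{L^\infty}$ (by the scaling $v\mapsto v/\|v_0\|_{L^\infty}$ we may assume $v_\varepsilon\le1$, whence $\phi_\varepsilon:=-\ln v_\varepsilon\ge0$); integrating the first equation gives $\int_\Omega u_\varepsilon(\cdot,t)=\int_\Omega u_0=:M$; and testing the second equation by $-v_\varepsilon^{-1}$ yields $\int_0^T\!\!\int_\Omega|\nabla\ln v_\varepsilon|^2\le C(T)$ together with a uniform bound for $-\ln v_\varepsilon$ in $L^\infty((0,T);L^1(\Omega))$.

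One further, fully unconditional estimate comes from testing the first equation against $-(u_\varepsilon+1)^{-\beta}$ with a small $\beta\in(0,1)$: since the coefficient $\frac{u_\varepsilon}{(u_\varepsilon+1)^{\beta+2}}$ that multiplies the chemotactic cross term is bounded, a Young estimate absorbed by the diffusion contribution produces the space-time bound $\int_0^T\!\!\int_\Omega\frac{u_\varepsilon^{m-1}|\nabla u_\varepsilon|^2}{(u_\varepsilon+1)^{\beta+2}}\le C(T)$, equivalently an $L^2(\Omega\times(0,T))$ bound on the gradient of a suitable power of $u_\varepsilon$; this estimate will survive intact into the eventual generalized formulation. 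The heart of the proof, however, is a coupled estimate for $\mathcal F_\varepsilon(t):=\int_\Omega u_\varepsilon\ln u_\varepsilon+\tfrac12\int_\Omega|\nabla\phi_\varepsilon|^2$. In the variables $(u_\varepsilon,\phi_\varepsilon)$ the system (modulo the $\varepsilon$-regularization) reads $\partial_t u_\varepsilon=\Delta u_\varepsilon^m+\nabla\cdot(u_\varepsilon\nabla\phi_\varepsilon)$ and $\partial_t\phi_\varepsilon=\Delta\phi_\varepsilon-|\nabla\phi_\varepsilon|^2+u_\varepsilon$, and differentiating $\mathcal F_\varepsilon$ while using both equations leads, after an integration by parts, to an identity of the form $\frac{d}{dt}\mathcal F_\varepsilon+\frac4m\int_\Omega|\nabla u_\varepsilon^{m/2}|^2+\int_\Omega(\Delta\phi_\varepsilon)^2=\int_\Omega\Delta\phi_\varepsilon\,|\nabla\phi_\varepsilon|^2$, so the task is to dominate the super-quadratic term on the right.

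Replacing $\Delta\phi_\varepsilon$ there via the second equation and retaining the favourable term $\int_\Omega u_\varepsilon|\nabla\phi_\varepsilon|^2$, this reduces to controlling contributions such as $\int_\Omega u_\varepsilon^m|\nabla\phi_\varepsilon|^2$ and $\int_\Omega|\nabla\phi_\varepsilon|^4$, which I would handle by Gagliardo--Nirenberg: interpolating $u_\varepsilon^{m/2}$ between $W^{1,2}(\Omega)$ and $L^{2/m}(\Omega)$ (the latter controlled by $M$) and $\nabla\phi_\varepsilon$ between $W^{1,2}(\Omega)$ and $L^2(\Omega)$, and then using a weighted Young inequality to absorb the outcome into $\int|\nabla u_\varepsilon^{m/2}|^2+\int(\Delta\phi_\varepsilon)^2$, leaving only a power of $\int_\Omega|\nabla\phi_\varepsilon|^2$. \emph{The main obstacle is precisely here}: one must verify that the two interpolation exponents add up to something strictly less than $1$ and that the residual power of $\int_\Omega|\nabla\phi_\varepsilon|^2$ is low enough that, combined with the $L^1(0,T)$ bound on $\int_\Omega|\nabla\phi_\varepsilon|^2$ already in hand, a Gr\"onwall argument returns a finite bound for $\mathcal F_\varepsilon$ on every interval $(0,T)$ — and a direct computation shows this balance holds exactly for $m>1+\frac{n-2}{2n}$ (for $m\ge 2$ the corresponding step is even easier, one uses a power functional $\int_\Omega u_\varepsilon^p$ instead). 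From this one reads off the remaining $\varepsilon$-independent bounds needed below: $u_\varepsilon$ in $L^{m+\frac2n}(\Omega\times(0,T))$ and in $L^\infty((0,T);L^1(\Omega))$ with $\int_\Omega u_\varepsilon\ln u_\varepsilon$ bounded, $\nabla u_\varepsilon^{m/2}$ in $L^2$, $\nabla\phi_\varepsilon$ in appropriate Lebesgue spaces, and corresponding time-regularity for $u_\varepsilon$ and $v_\varepsilon$ (respectively for $\varphi(u_\varepsilon)$, $\varphi$ as below) in spaces of negative order, say $L^1(0,T;(W^{k,2}(\Omega))^*)$.

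With these bounds secured, the passage to the limit is carried out along a sequence $\varepsilon_j\downarrow0$. An Aubin--Lions--Simon argument, using the equi-integrability of $\{u_{\varepsilon_j}\}$ provided by the entropy bound together with the time-derivative bounds, gives $u_{\varepsilon_j}\to u$ in $L^1_{\mathrm{loc}}(\bar\Omega\times[0,\infty))$ and a.e., and $v_{\varepsilon_j}\to v$ locally uniformly and a.e., with $v>0$ a.e. and $\ln v$ locally in $L^2((0,\infty);W^{1,1}(\Omega))$; moreover $\nabla u_{\varepsilon_j}^{m/2}\rightharpoonup\nabla u^{m/2}$ and $\nabla v_{\varepsilon_j}\rightharpoonup\nabla v$ weakly in $L^2_{\mathrm{loc}}$. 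Passing to the limit is then straightforward in the weak formulation of the $v$-equation and in the mass identity $\int_\Omega u(\cdot,t)=\int_\Omega u_0$; for the $u$-equation one passes to the limit in the distributional inequality satisfied by $\varphi(u_\varepsilon)$ for a fixed concave test function $\varphi(s)=-(s+1)^{-\beta}$ (whose derivative is bounded, so that every term stays meaningful under the available bounds), the convex diffusion-dissipation term being treated by weak lower semicontinuity so that the correct inequality sign is preserved in the limit. Collecting these facts shows that $(u,v)$ is a global generalized solution of (\ref{e11}) in the sense of Definition~\ref{defi21}, which is the assertion of Theorem~\ref{main result}; every step other than the closing of the coupled entropy/second-order estimate is, by the standards of this circle of problems, technical but routine.
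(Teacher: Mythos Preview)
Your proposal takes a genuinely different route from the paper, and the central step contains a gap. You build the a priori theory on the coupled functional $\mathcal F_\varepsilon=\int u_\varepsilon\ln u_\varepsilon+\tfrac12\int|\nabla\phi_\varepsilon|^2$ and assert that the Gagliardo--Nirenberg balance needed to absorb $\int\Delta\phi_\varepsilon\,|\nabla\phi_\varepsilon|^2$ closes precisely when $m>1+\tfrac{n-2}{2n}$. That is not so: after Cauchy--Schwarz one has to control $\int|\nabla\phi_\varepsilon|^4$, and interpolating $\nabla\phi_\varepsilon$ between $W^{1,2}$ and $L^2$ gives an exponent $a=n/4$ on the second-order term, which cannot be absorbed into $\int(\Delta\phi_\varepsilon)^2$ once $n\ge3$. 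This is exactly the obstruction behind Lankeit's stronger requirement $m>1+\tfrac n4$ for the bounded-solution argument you are mimicking; your ``direct computation'' does not actually produce the threshold $1+\tfrac{n-2}{2n}$.

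The paper avoids this entirely by a much more elementary scheme. It never differentiates $\int|\nabla\phi_\varepsilon|^2$; the space--time bound on $\nabla w_\varepsilon=\nabla\phi_\varepsilon$ in $L^2$ is obtained for free just by integrating the $w$-equation over $\Omega$ (Lemma~\ref{lem2.2}). With that in hand, testing the $u$-equation against $(u_\varepsilon+\varepsilon)^{m-2}$ and a single Cauchy--Schwarz step already yield $\nabla(u_\varepsilon+\varepsilon)^{m-1}\in L^2(\Omega\times(0,T))$ uniformly in $\varepsilon$ (Lemma~\ref{lem2.3}); no entropy, no second-order quantity, no balance of interpolation exponents. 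The condition $m>1+\tfrac{n-2}{2n}$ enters only later, in the Vitali equi-integrability argument (Lemma~\ref{lem2.7}): one needs $(u_\varepsilon+\varepsilon)^{m-1}$ bounded in $L^1((0,T);L^p(\Omega))$ for some $p$ with $p(m-1)>1$, and since Lemma~\ref{lem2.4} supplies $p$ up to $\tfrac{2n}{n-2}$, this forces $m-1>\tfrac{n-2}{2n}$. Note also that Definition~\ref{defi21} asks for the supersolution-type inequality~(\ref{e25}) in terms of $u^{m-1}$, not in terms of $-(u+1)^{-\beta}$; your limiting step as written would not deliver the specific object required, whereas the paper's choice of test function $(u_\varepsilon+\varepsilon)^{m-2}$ is tailored to~(\ref{e25}) and to the regularity $\nabla u^{m-1}\in L^2_{\mathrm{loc}}$ demanded in~(\ref{e23}).
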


The rest of this paper is organized as follows. In Section 2, we introduce the conception of generalized solution. In Section 3, we give some priori estimates and state some straightforward consequences. Section 4 is devoted to showing the strong convergence of the solution of regularized problems. Finally, we give the proof of the main result in Section 5.

\vskip 3mm
\section{Contraction of Global Generalized Solutions}
\begin{defn}\label{defi21}
Assume that $u_0$ and $v_0$ satisfy $(\ref{e12})$. Then a pair $(u,v)$ of functions
\begin{equation}\label{e21}
\begin{cases}
\begin{aligned}
&u\in L^1_{\rm{loc}}(\bar{\Omega}\times[0,\infty)),\\
&v\in L^{\infty}_{\rm{loc}}(\bar{\Omega}\times[0,\infty))\cap L^2_{\rm{loc}}([0,\infty);W^{1,2}(\Omega)),
\end{aligned}
\end{cases}
\end{equation}
with
\begin{equation}\label{e22}
 u\geq 0\ \text{a.e. in}\ \Omega \times (0,\infty)\ \text{and}\ v>0\ \text{a.e. in}\ \Omega \times (0,\infty)
\end{equation}
as well as
\begin{equation}\label{e23}
\nabla u^{m-1}\in L^2_{\rm{loc}}(\bar{\Omega}\times[0,\infty))\ \text{and}\ \nabla \ln v\in\ L^2_{\rm{loc}}(\bar{\Omega}\times[0,\infty)),
\end{equation}
will be called a \emph{global generalized solution }of (1.4) if $u$ has the mass conservation property
\begin{equation}\label{e24}
\int_{\Omega}u(x,t)dx=\int_{\Omega}u_0(x)\ \ \text{for a.e.}\ t>0,
\end{equation}
if the inequality
\begin{equation}\label{e25}
\begin{split}
&-\frac{m-1}{m(m-2)}\int_{0}^{\infty}\int_{\Omega}u^{m-1}\varphi_t-\frac{m-1}{m(m-2)}\int_{\Omega}u_0^{m-1}\varphi(\cdot,0) \\
&\leq-\int_{0}^{\infty}\int_{\Omega}|\nabla u^{m-1}|^2\varphi- \frac{m-1}{m-2}\int_{0}^{\infty}\int_{\Omega}u^{m-1}\nabla u^{m-1}\cdot\nabla\varphi\\
&+\frac{m-1}{m}\int_{0}^{\infty}\int_{\Omega}(\nabla u^{m-1}\cdot\nabla \ln v)\varphi+\frac{(m-1)^2}{m(m-2)}\int_{0}^{\infty}\int_{\Omega}u^{m-1}\nabla \ln v\cdot\nabla\varphi
\end{split}
\end{equation}
holds for each non-negative $\varphi\in C_{0}^{\infty} (\bar{\Omega}\times [0,\infty))$, and if moreover the identity
\begin{equation}\label{e26}
\int_{0}^{\infty}\int_{\Omega}v\varphi_t+\int_{\Omega}v_0\varphi(\cdot,0)=\int_{0}^{\infty}\int_{\Omega}\nabla v\cdot\nabla\varphi+\int_{0}^{\infty}\int_{\Omega}uv\varphi
\end{equation}
is valid for any $\varphi\in L^{\infty}(\Omega\times(0,\infty))\cap L^2((0,\infty);W^{1,2}(\Omega))$ having compact support in $\bar{\Omega}\times[0,\infty)$ with $\varphi_t\in L^2(\Omega\times(0,\infty))$.
\end{defn}

\begin{rem}\label{remone}
When $m=2$, (\ref{e25}) should be replaced by the identity
\begin{equation}\label{eremone}
-\frac{1}{2}\int_{0}^{\infty}\int_{\Omega}u\varphi_t-\frac{1}{2}\int_{\Omega}u_0\varphi(\cdot,0)
=-\int_{0}^{\infty}\int_{\Omega}u\nabla u\cdot\nabla\varphi
+\frac{1}{2}\int_{0}^{\infty}\int_{\Omega}u\nabla \ln v\cdot\nabla\varphi.
\end{equation}
\end{rem}

In order to construct such generalized solutions by means of an approximation procedure, throughout the sequel we fix a nonincreasing cut-off function $\rho\in C^{\infty}([0,\infty))$ fulfilling $\rho\equiv 1$ in $[0,1]$ and $\rho\equiv 0$ in $[2,\infty)$, and define $f_{\varepsilon}\in C^{\infty}([0,\infty))$ by letting
\begin{equation}\label{e27}
f_{\varepsilon}(s):=\int_{0}^{s}\rho(\varepsilon\sigma)d\sigma,\ \ s\geq 0,
\end{equation}
for $\varepsilon\in(0,1)$. Then for any such $\varepsilon$, the properties of $\rho$ imply that $f_{\varepsilon}$ evidently satisfies
\begin{equation}\label{e28}
f_{\varepsilon}(0)=0\ \ \text{and}\ 0\leq f'_{\varepsilon}\leq 1\ \ \text{on}\ [0,\infty)
\end{equation}
as well as
\begin{equation}\label{e29}
f_{\varepsilon}(s)=s\ \ \text{for all}\ s\in[0,\frac{1}{\varepsilon}]\ \ \text{and}\ f'_{\varepsilon}(s)=0\ \ \text{for all}\ s\geq\frac{2}{\varepsilon},
\end{equation}
and moreover we have
\begin{equation*}
f_{\varepsilon}(s)\nearrow s\ \ \text{and}\ \ f'_{\varepsilon}(s)\nearrow 1\ \text{as}\ \ \varepsilon\searrow 0
\end{equation*}
for each $s\geq 0$.

Now as a consequence of $(\ref{e29})$, each of the approximate problems:

\begin{equation}\label{e210}
\begin{cases}
\begin{array}{lll}
\medskip
u_{\varepsilon t}=\nabla(D_\varepsilon(u_{\varepsilon})\nabla u_{\varepsilon})-\nabla\cdot(\frac{u_{\varepsilon} f'_{\varepsilon}(u_{\varepsilon})}{v_{\varepsilon}}\nabla v_{\varepsilon}), \ \ \ &x\in \Omega,\ t>0,\\
\medskip
v_{\varepsilon t}=\Delta v_{\varepsilon}-f_{\varepsilon}(u_{\varepsilon})v_{\varepsilon}, \ &x\in \Omega,\ t>0,\\
\medskip
\frac{\partial u_{\varepsilon}}{\partial \nu}=\frac{\partial v_{\varepsilon}}{\partial \nu}=0,\ &x\in \partial \Omega,\ t>0,\\
\medskip
u_{\varepsilon}(x,0)=u_0(x),\ \ v_{\varepsilon}(x,0)=v_0(x) \ \ \ &x\in \Omega\\
\end{array}
\end{cases}
\end{equation}
is in fact globally solvable, where $D_\varepsilon(s)=D(s+\varepsilon)$, $D(s)=ms^{m-1}$.

~\\
\begin{lem}\label{lem2.1}
Assume that $(\ref{e12})$ holds,
and let $\varepsilon\in (0,1)$. Then $(\ref{e210})$ possesses a global classical solution $(u_{\varepsilon},v_{\varepsilon})$, for each $\vartheta>2$ uniquely determined by the inclusions
\begin{equation*}
\begin{cases}
\begin{aligned}
&u_{\varepsilon}\in C^0(\bar{\Omega}\times [0,\infty))\cap C^{2,1}(\bar{\Omega}\times (0,\infty)),\\
&v_{\varepsilon}\in C^0(\bar{\Omega}\times [0,\infty))\cap C^{2,1}(\bar{\Omega}\times (0,\infty))\cap L_{\rm{loc}}^{\infty}([0,\infty);W^{1,\vartheta}(\Omega)),
\end{aligned}
\end{cases}
\end{equation*}
which is such that $u_{\varepsilon}>0$ in $\bar{\Omega}\times (0,\infty)$ and
\begin{equation}\label{e211}
\int_{\Omega}u_{\varepsilon}(x,t)dx=\int_{\Omega}u_0(x)dx\ \ \text{for all}\ t>0
\end{equation}
as well as
\begin{equation}\label{e212}
0<v_{\varepsilon}\leq\|v_0\|_{L^{\infty}(\Omega)}\ \ \ \text{in}\ \bar{\Omega}\times [0,\infty).
\end{equation}
\end{lem}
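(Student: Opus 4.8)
The plan is to establish Lemma~\ref{lem2.1} via the standard fixed-point/continuation scheme for quasilinear parabolic systems, combined with a priori bounds that rule out finite-time blow-up. First I would invoke a local existence theory: since $D_\varepsilon(s)=m(s+\varepsilon)^{m-1}\geq m\varepsilon^{m-1}>0$ is smooth and uniformly positive on bounded sets, and since the chemotactic term $\nabla\cdot\bigl(\tfrac{u_\varepsilon f'_\varepsilon(u_\varepsilon)}{v_\varepsilon}\nabla v_\varepsilon\bigr)$ is locally Lipschitz in $(u_\varepsilon,v_\varepsilon)$ as long as $v_\varepsilon$ stays bounded away from $0$, a contraction-mapping argument in a suitable $C^0\times W^{1,\vartheta}$-type space (with $\vartheta>2$ fixed as in the statement) yields a unique classical solution on a maximal time interval $[0,T_{\max,\varepsilon})$, with the extensibility criterion that $T_{\max,\varepsilon}<\infty$ forces $\limsup_{t\uparrow T_{\max,\varepsilon}}\bigl(\|u_\varepsilon(\cdot,t)\|_{L^\infty(\Omega)}+\|v_\varepsilon(\cdot,t)\|_{W^{1,\vartheta}(\Omega)}+\|1/v_\varepsilon(\cdot,t)\|_{L^\infty(\Omega)}\bigr)=\infty$. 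Positivity of $u_\varepsilon$ for $t>0$ follows from the strong maximum principle applied to the first equation, whose zero-order terms are bounded on compact time intervals.

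Next I would record the two elementary a priori identities. Integrating the first equation over $\Omega$ and using the no-flux boundary conditions kills both the diffusive and the taxis flux, giving $\tfrac{d}{dt}\int_\Omega u_\varepsilon=0$, hence \eqref{e211}. For \eqref{e212}, note that $v_\varepsilon\equiv 0$ is a subsolution and $v_\varepsilon\equiv\|v_0\|_{L^\infty(\Omega)}$ is a supersolution of the second equation (since $f_\varepsilon(u_\varepsilon)v_\varepsilon\geq 0$ and $f_\varepsilon\geq 0$), so the comparison principle yields $0<v_\varepsilon\leq\|v_0\|_{L^\infty(\Omega)}$ on $\bar\Omega\times[0,T_{\max,\varepsilon})$; strict positivity again comes from the strong maximum principle because $v_0>0$ on $\bar\Omega$.

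The main obstacle is upgrading local existence to global existence, i.e.\ controlling the three quantities in the extensibility criterion on arbitrary finite time intervals $[0,T]$. The bound on $\|v_\varepsilon\|_{L^\infty}$ is already in hand; the lower bound on $v_\varepsilon$ is the delicate point, since $1/v_\varepsilon$ enters the sensitivity, but here one can exploit that $f_\varepsilon$ is bounded (by $2/\varepsilon$ on its support, in fact $f_\varepsilon(u_\varepsilon)\le 2/\varepsilon$ pointwise is false, but $f'_\varepsilon$ vanishes beyond $2/\varepsilon$) — more cleanly, testing the $v_\varepsilon$-equation against suitable powers and using the uniform $L^1$ bound on $u_\varepsilon$, or directly: since $v_\varepsilon$ solves $v_{\varepsilon t}-\Delta v_\varepsilon=-f_\varepsilon(u_\varepsilon)v_\varepsilon$ with $f_\varepsilon(u_\varepsilon)\in L^\infty_{\mathrm{loc}}$ once $u_\varepsilon$ is controlled, a pointwise lower bound of the form $v_\varepsilon\geq c(\varepsilon,T)e^{-\|f_\varepsilon(u_\varepsilon)\|_{L^\infty}t}\min_{\bar\Omega}v_0$ follows from comparison. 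Then to control $\|u_\varepsilon\|_{L^\infty}$ one runs the usual $\varepsilon$-dependent bootstrap: an $L^p$-testing of the first equation (using $D_\varepsilon\geq m\varepsilon^{m-1}$ and the now-available lower bound on $v_\varepsilon$ together with $|\nabla v_\varepsilon|$ estimates from parabolic smoothing in the second equation) yields $L^p$ bounds for every $p<\infty$ on $[0,T]$, and a Moser–Alikakos iteration or semigroup argument promotes this to an $L^\infty$ bound; finally $\|v_\varepsilon\|_{W^{1,\vartheta}}$ is controlled by applying $L^\vartheta$–$W^{1,\vartheta}$ smoothing estimates for the Neumann heat semigroup to the Duhamel representation of $v_\varepsilon$, using the $L^\infty$ bound on $f_\varepsilon(u_\varepsilon)v_\varepsilon$. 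Since all these bounds, though possibly depending on $\varepsilon$ and $T$, stay finite as $t\uparrow T$, the extensibility criterion forces $T_{\max,\varepsilon}=\infty$, completing the proof.
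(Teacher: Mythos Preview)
Your framework (local existence with an extensibility criterion, mass conservation, comparison for $v_\varepsilon$) matches the paper's, but the core global-existence step contains a genuine gap and also a factual error.

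First, the error: your parenthetical ``in fact $f_\varepsilon(u_\varepsilon)\le 2/\varepsilon$ pointwise is false'' is itself false. Since $f'_\varepsilon(s)=\rho(\varepsilon s)$ vanishes for $s\ge 2/\varepsilon$ and $0\le f'_\varepsilon\le 1$, one has $f_\varepsilon(s)=f_\varepsilon(2/\varepsilon)\le 2/\varepsilon$ for all $s\ge 2/\varepsilon$, and $f_\varepsilon(s)\le s\le 2/\varepsilon$ for $s\le 2/\varepsilon$. So $f_\varepsilon\le 2/\varepsilon$ everywhere, and the lower bound $v_\varepsilon\ge (\min_{\bar\Omega}v_0)\,e^{-(2/\varepsilon)t}$ follows from comparison \emph{immediately}, with no prior control on $u_\varepsilon$ needed.

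Second, and more seriously, the argument you actually run is circular: you say the lower bound on $v_\varepsilon$ will hold ``once $u_\varepsilon$ is controlled,'' and then propose to control $\|u_\varepsilon\|_{L^\infty}$ by $L^p$-testing ``using \ldots\ the now-available lower bound on $v_\varepsilon$.'' As written, neither step can be completed first. The paper avoids this entirely by exploiting the very property you mention but do not use: because $\operatorname{supp}f'_\varepsilon\subset[0,2/\varepsilon]$, the taxis term $\nabla\cdot\bigl(u_\varepsilon f'_\varepsilon(u_\varepsilon)v_\varepsilon^{-1}\nabla v_\varepsilon\bigr)$ vanishes wherever $u_\varepsilon>2/\varepsilon$, so the maximum principle applied to the first equation in \eqref{e210} yields directly
\[
u_\varepsilon(x,t)\le c_1(\varepsilon):=\max\{\|u_0\|_{L^\infty(\Omega)},\,2/\varepsilon\}\quad\text{on }\Omega\times(0,T_{\max,\varepsilon}),
\]
with no Moser iteration, no $L^p$-testing, and no information about $v_\varepsilon$ required. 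From this $L^\infty$ bound, parabolic regularity gives the $W^{1,\vartheta}$ bound on $v_\varepsilon$, and comparison gives $v_\varepsilon\ge(\min_{\bar\Omega}v_0)e^{-c_1(\varepsilon)t}$, ruling out all alternatives in the extensibility criterion. This is the whole point of the regularization \eqref{e27}: the cutoff in $f'_\varepsilon$ is precisely what makes the $u_\varepsilon$-bound trivial.
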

\begin{proof}
By straightforward adaptation of well-known arguments, as detailed e.g. in \cite{Horstmann&Winkler-JDE-2005} and \cite{Yokota2015Stabilization} for closed related situations, it can be verified that for each $\varepsilon\in(0,1)$ there exist $T_{max,\varepsilon}\in(0,\infty]$ and a unique couple of functions
\begin{equation*}
\begin{cases}
\begin{aligned}
&u_{\varepsilon}\in C^0(\bar{\Omega}\times [0,T_{max,\varepsilon}))\cap C^{2,1}(\bar{\Omega}\times (0,T_{max,\varepsilon})),\\
&v_{\varepsilon}\in C^0(\bar{\Omega}\times [0,T_{max,\varepsilon}))\cap C^{2,1}(\bar{\Omega}\times (0,T_{max,\varepsilon}))\cap L_{\rm{loc}}^{\infty}([0,T_{max,\varepsilon});W^{1,\vartheta}(\Omega)),
\end{aligned}
\end{cases}
\end{equation*}
with $u_{\varepsilon}>0$ in $\bar{\Omega}\times(0,T_{max,\varepsilon})$ and $v_{\varepsilon}>0$ in $\bar{\Omega}\times[0,T_{max,\varepsilon})$, and such that
\begin{equation}\label{e213}
\begin{split}
\text{either}\ \ T_{max.\varepsilon}=\infty,\ &\text{or}\ \ \limsup\limits_{t\nearrow T_{max,\varepsilon}}(\|u_{\varepsilon}(\cdot,t)\|_{L^{\infty}(\Omega)}+\|v_{\varepsilon}(\cdot,t)\|_{W^{1,\vartheta}(\Omega)})=\infty,\\
&\text{or}\ \ \limsup\limits_{t\nearrow T_{max,\varepsilon}}\inf_{x\in\Omega}v_{\varepsilon}(x,t)=0.
\end{split}
\end{equation}
Moreover, an intergation of the first equation in $(\ref{e210})$ over $x\in\Omega$ shows that this solution enjoys the mass conservation property
\begin{equation*}
\frac{d}{dt}\int_{\Omega}u_{\varepsilon}=0\ \ \text{for all}\ t\in(0,T_{\max,\varepsilon}),
\end{equation*}
whereas from the non-negativity of $f_\varepsilon$ and the maximum principle it follows that
\begin{equation*}
\|v_\varepsilon(\cdot,t)\|_{L^\infty(\Omega)}\leq\|v_0\|_{L^\infty(\Omega)}\ \ \text{in}\ \bar{\Omega}\times(0.T_{\max,\varepsilon}).
\end{equation*}

To prove the lemma, we thus only need to verify that for any fixed $\varepsilon\in(0,1)$, the corresponding maximal existence time $T_{\max,\varepsilon}$ can not be finite, which amounts to showing that in $(\ref{e213})$, neither the second nor the third alternative can occur under the contrary hypothesis that $T_{\max,\varepsilon}<\infty$. But since supp$f'_\varepsilon\subset[0,\frac{2}{\varepsilon}]$ by $(\ref{e29})$,an application of the maximum principle to the first equation in $(\ref{e210})$ shows that
\begin{equation}\label{e214}
u_\varepsilon(x,t)\leq c_1(\varepsilon):=\max\big\{\|u_0\|_{L^\infty(\Omega)},\frac{2}{\varepsilon}\big\}\ \ \text{for all}\ x\in\Omega\ \text{and}\ t\in(0,T_{\max,\varepsilon}).
\end{equation}
Together with $(\ref{e212})$ and argument from parabolic regularity theory, this firstly ensures that for each $\tau\in(0,T_{\max,\varepsilon})$ the number $\sup_{t\in(\tau,T_{\max,\varepsilon})}\|v_\varepsilon(\cdot,t)\|_{W^{1,\vartheta}}$ is finite. Secondly, by comparison in the second equation in $(\ref{e210})$ we moreover obtain from $(\ref{e214})$ that
\begin{equation*}
v_{\varepsilon}(x,t)\geq\big\{\min\limits_{y\in\Omega}v_0(y)\big\}\cdot e^{-c_1(\varepsilon)t}\ \ \text{for all}\ x\in\Omega\ \text{and}\ t\in(0,T_{\max,\varepsilon}),
\end{equation*}
which also excludes the rightmost alternative in $(\ref{e213})$ and thereby completes the proof.
\end{proof}

Now following a standard procedure of changing variables in $(\ref{e11})$, taking $u_{\varepsilon}$ and $v_{\varepsilon}$ from Lemma \ref{lem2.1} we substitute
\begin{equation}\label{e215}
w_{\varepsilon}:=-\ln (\frac{v_{\varepsilon}(x,t)}{\|v_0\|_{L^{\infty}(\Omega)}}),\ \ (x,t)\in \bar{\Omega}\times [0,\infty),\ \ \varepsilon\in (0,1),
\end{equation}
and thus infer from the latter that each of the problems
\begin{equation}\label{e216}
\begin{cases}
\begin{array}{lll}
\medskip
u_{\varepsilon t}=\nabla(D_\varepsilon(u_{\varepsilon})\nabla u_{\varepsilon})+\nabla\cdot(u_{\varepsilon} f'_{\varepsilon}(u_{\varepsilon})\nabla w_{\varepsilon}), \ \ \ &x\in \Omega,\ t>0,\\
\medskip
w_{\varepsilon t}=\Delta w_{\varepsilon}-|\nabla w_{\varepsilon}|^2+f_{\varepsilon}(u_{\varepsilon}), \ &x\in \Omega,\ t>0,\\
\medskip
\frac{\partial u_{\varepsilon}}{\partial \nu}=\frac{\partial w_{\varepsilon}}{\partial \nu}=0,\ &x\in \partial \Omega,\ t>0,\\
\medskip
u_{\varepsilon}(x,0)=u_0(x),\\
\medskip
w_{\varepsilon}(x,0)=w_0(x):=-\ln (\frac{v_{0}(x,t)}{\|v_0\|_{L^{\infty}(\Omega)}}) \ \ \ &x\in \Omega,
\end{array}
\end{cases}
\end{equation}
possesses a global classical solution $(u_{\varepsilon},w_{\varepsilon})$ for which both $u_{\varepsilon}$ and $w_{\varepsilon}$ are nonnegative.

\section{A priori estimates}
\begin{lem}\label{lem2.2}
For all $\varepsilon\in (0,1)$, we have
\begin{equation}\label{e217}
\int_{\Omega}w_{\varepsilon}(\cdot,t)+\int_{0}^{t}\int_{\Omega}|\nabla w_{\varepsilon}|^2\leq\int_{\Omega}w_{0}+\lambda t\ \ \text{for all}\ t>0,
\end{equation}
where $\lambda:=\int_{\Omega}u_0$. In particular
\begin{equation}\label{e218}
\|w_{\varepsilon}(\cdot,t)\|_{L^1(\Omega)}\leq\int_{\Omega}w_0+\lambda t\ \ \text{for all}\ t>0
\end{equation}
and
\begin{equation}\label{e219}
\int_{0}^{t}\int_{\Omega}|\nabla w_{\varepsilon}|^2\leq\int_{\Omega}w_0+\lambda t\ \ \text{for all}\ t>0,
\end{equation}
where $\lambda:=\int_{\Omega}u_0$.
\end{lem}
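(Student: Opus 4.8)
The plan is to integrate the $w_\varepsilon$-equation in $(\ref{e216})$ over $\Omega$ and exploit the favorable sign of the gradient term. Testing the second equation with the constant function $1$, and using the Neumann boundary condition to kill $\int_\Omega \Delta w_\varepsilon = 0$, we obtain
\begin{equation*}
\frac{d}{dt}\int_\Omega w_\varepsilon = -\int_\Omega |\nabla w_\varepsilon|^2 + \int_\Omega f_\varepsilon(u_\varepsilon) \qquad \text{for all } t>0.
\end{equation*}
The first step is to control the production term: since $0 \le f_\varepsilon(s) \le s$ for all $s \ge 0$ by $(\ref{e28})$ together with the fact that $f_\varepsilon$ is nondecreasing with $f_\varepsilon(0)=0$, we have $\int_\Omega f_\varepsilon(u_\varepsilon) \le \int_\Omega u_\varepsilon$, and then the mass conservation property $(\ref{e211})$ from Lemma \ref{lem2.1} gives $\int_\Omega f_\varepsilon(u_\varepsilon(\cdot,t)) \le \int_\Omega u_0 = \lambda$ for every $t>0$.

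Hence $\frac{d}{dt}\int_\Omega w_\varepsilon + \int_\Omega |\nabla w_\varepsilon|^2 \le \lambda$, and integrating this differential inequality in time from $0$ to $t$ — noting $w_\varepsilon(\cdot,0)=w_0$ — yields
\begin{equation*}
\int_\Omega w_\varepsilon(\cdot,t) + \int_0^t\int_\Omega |\nabla w_\varepsilon|^2 \le \int_\Omega w_0 + \lambda t,
\end{equation*}
which is precisely $(\ref{e217})$. The two consequences $(\ref{e218})$ and $(\ref{e219})$ then follow immediately: for $(\ref{e219})$ one simply drops the nonnegative term $\int_\Omega w_\varepsilon(\cdot,t)$, which is legitimate because $w_\varepsilon \ge 0$ by construction (this is why the normalization $w_\varepsilon = -\ln(v_\varepsilon/\|v_0\|_{L^\infty})$ in $(\ref{e215})$ was chosen, guaranteeing $v_\varepsilon \le \|v_0\|_{L^\infty}$ from $(\ref{e212})$ and thus $w_\varepsilon \ge 0$); and $(\ref{e218})$ follows since $\|w_\varepsilon(\cdot,t)\|_{L^1(\Omega)} = \int_\Omega w_\varepsilon(\cdot,t)$ by the same nonnegativity, after discarding the other nonnegative term $\int_0^t\int_\Omega |\nabla w_\varepsilon|^2$.

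There is no real obstacle here; the only point requiring a little care is the justification that the formal computation is rigorous. Since Lemma \ref{lem2.1} provides a genuine classical solution $(u_\varepsilon, v_\varepsilon)$ on $[0,\infty)$ with the regularity stated there, $w_\varepsilon$ inherits $C^{2,1}(\bar\Omega\times(0,\infty)) \cap C^0(\bar\Omega\times[0,\infty))$ regularity, so the integration over $\Omega$ and the differentiation in $t$ are fully justified, and the boundary term vanishes by $\partial w_\varepsilon/\partial\nu = 0$. The estimates are uniform in $\varepsilon\in(0,1)$ because neither $\lambda = \int_\Omega u_0$ nor $\int_\Omega w_0$ depends on $\varepsilon$, which is exactly the feature that will later allow passage to the limit.
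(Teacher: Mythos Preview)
Your proof is correct and follows exactly the same approach as the paper: integrate the $w_\varepsilon$-equation over $\Omega$, bound $\int_\Omega f_\varepsilon(u_\varepsilon)\le\lambda$ via $f_\varepsilon(s)\le s$ and mass conservation, integrate in time, and then use $w_\varepsilon\ge 0$ to read off (\ref{e218}) and (\ref{e219}). Your additional remarks on the regularity justification and the reason for the normalization in (\ref{e215}) are accurate and simply make explicit what the paper leaves implicit.
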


\begin{proof}
An integration of second equation in $(\ref{e216})$ over $\Omega$ shows that
\begin{equation*}
\frac{d}{dt}\int_{\Omega}w_{\varepsilon}=-\int_{\Omega}|\nabla w_{\varepsilon}|^2+\int_{\Omega}f_{\varepsilon}(u_{\varepsilon})\ \ \text{for all}\ t>0.
\end{equation*}
Since $\int_{\Omega}f_{\varepsilon}(u_{\varepsilon}(\cdot,t))\leq\int_{\Omega}u_{\varepsilon}(\cdot,t)=\lambda$ for all $t>0$ by $(\ref{e27})$ and $(\ref{e211})$, this immediately yields $(\ref{e217})$. Whereupon thanks to the non-negativity of $w_{\varepsilon}$, both $(\ref{e218})$ and $(\ref{e219})$ are evident consequences thereof.
\end{proof}
\begin{lem}\label{lem2.3}
Let $m>1$. Then for all $\varepsilon\in (0,1)$, there exists $C(m)>0$ such that
\begin{equation}\label{e220}
\int_{0}^{t}\int_{\Omega}|\nabla (u_{\varepsilon}+\varepsilon)^{m-1}|^2\leq C(m)\left(\int_{\Omega}w_0+\lambda t+\lambda\right)\ \ \text{for all}\ t>0,
\end{equation}
where $\lambda :=\int_{\Omega}u_0$.
\end{lem}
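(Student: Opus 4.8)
The plan is to test the first equation in \eqref{e216} with a suitable power of $(u_\varepsilon+\varepsilon)$, so that the diffusion term produces exactly the gradient quantity $\int_\Omega |\nabla(u_\varepsilon+\varepsilon)^{m-1}|^2$ we wish to control, and then absorb the chemotaxis contribution using Young's inequality together with the estimate \eqref{e219} on $\int_0^t\int_\Omega|\nabla w_\varepsilon|^2$ from Lemma \ref{lem2.2}. Concretely, I would multiply the $u_\varepsilon$-equation by $(u_\varepsilon+\varepsilon)^{m-1}$ (equivalently, differentiate $\int_\Omega (u_\varepsilon+\varepsilon)^m$ in time up to a constant factor), integrate over $\Omega$ and integrate by parts, using the homogeneous Neumann conditions and the fact that $\frac{\partial u_\varepsilon}{\partial\nu}=0$ so that $\nabla(u_\varepsilon+\varepsilon)^{m-1}$ has vanishing normal trace on $\partial\Omega$. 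Since $D_\varepsilon(u_\varepsilon)=m(u_\varepsilon+\varepsilon)^{m-1}$, the diffusion term becomes $-(m-1)\int_\Omega m(u_\varepsilon+\varepsilon)^{2(m-1)-?}\,|\nabla u_\varepsilon|^2$, and a short computation with $\nabla(u_\varepsilon+\varepsilon)^{m-1}=(m-1)(u_\varepsilon+\varepsilon)^{m-2}\nabla u_\varepsilon$ rewrites this as a negative constant times $\int_\Omega|\nabla(u_\varepsilon+\varepsilon)^{m-1}|^2$; one should track the precise constant, which is where $C(m)$ enters.

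For the cross term coming from $\nabla\cdot(u_\varepsilon f_\varepsilon'(u_\varepsilon)\nabla w_\varepsilon)$, integration by parts against $(u_\varepsilon+\varepsilon)^{m-1}$ yields, up to constants, $\int_\Omega u_\varepsilon f_\varepsilon'(u_\varepsilon)(u_\varepsilon+\varepsilon)^{m-2}\,\nabla u_\varepsilon\cdot\nabla w_\varepsilon$. Using $0\le f_\varepsilon'\le1$ and $u_\varepsilon\le u_\varepsilon+\varepsilon$ bounds the integrand pointwise by $(u_\varepsilon+\varepsilon)^{m-1}|\nabla u_\varepsilon|\,|\nabla w_\varepsilon|$, which I would rewrite in terms of $|\nabla(u_\varepsilon+\varepsilon)^{m-1}|$ and then split via Young's inequality as $\delta|\nabla(u_\varepsilon+\varepsilon)^{m-1}|^2 + C_\delta (u_\varepsilon+\varepsilon)^{2}|\nabla w_\varepsilon|^2$ — but that last factor still carries a power of $u_\varepsilon+\varepsilon$ that needs to be handled. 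A cleaner route is to absorb so that after choosing $\delta$ small, the bad term is $C_\delta\int_\Omega (u_\varepsilon+\varepsilon)^{?}|\nabla w_\varepsilon|^2$; to make this integrable I expect one must balance the exponents so that, after the choice of the test power, the remaining factor on $u_\varepsilon$ is either absent or only linear, leaving $\int_\Omega (u_\varepsilon+\varepsilon)|\nabla w_\varepsilon|^2$ — and here is where I anticipate needing a separate bound. Rethinking: the slickest choice is to test against $(u_\varepsilon+\varepsilon)^{m-1}$ only after noticing $u_\varepsilon f_\varepsilon'(u_\varepsilon)\le u_\varepsilon+\varepsilon$ is not enough; instead one uses $u_\varepsilon f_\varepsilon'(u_\varepsilon)\le \frac 1\varepsilon$ is too lossy. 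The honest resolution is that the cross term, after IBP, can be integrated by parts \emph{again} moving the derivative onto $\nabla w_\varepsilon$, giving $-\int_\Omega (u_\varepsilon+\varepsilon)^m\,\Delta w_\varepsilon/(\text{stuff})$ type expressions involving $f_\varepsilon(u_\varepsilon)-|\nabla w_\varepsilon|^2$ via the $w_\varepsilon$-equation; since $f_\varepsilon(u_\varepsilon)\ge0$ and $-|\nabla w_\varepsilon|^2\le0$, one obtains a favorable sign on part of it. I would pursue whichever of these two manipulations keeps all surviving terms either sign-definite or controlled by \eqref{e219} and the time-integrated $L^1$ mass $\lambda t$.

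After the pointwise-in-time differential inequality of the form $\frac{d}{dt}\int_\Omega(u_\varepsilon+\varepsilon)^m + c\int_\Omega|\nabla(u_\varepsilon+\varepsilon)^{m-1}|^2 \le C(m)\int_\Omega|\nabla w_\varepsilon|^2 + C(m)\lambda$ is established, I would integrate it over $(0,t)$. On the left, $\int_\Omega(u_\varepsilon(\cdot,t)+\varepsilon)^m\ge0$ can be dropped, and the initial term $\int_\Omega(u_0+\varepsilon)^m$ is bounded by a constant depending only on $u_0$ and $m$ (using $u_0\in C^0(\bar\Omega)$ and $\varepsilon<1$), contributing an additive constant of the type $C(m)\lambda$ after possibly enlarging $C(m)$ — or one simply absorbs $\int_\Omega(u_0+1)^m$ into the stated right-hand side since \eqref{e220} already allows an additive $C(m)\lambda$ and a harmless reinterpretation of constants; on the right, $\int_0^t\int_\Omega|\nabla w_\varepsilon|^2\le\int_\Omega w_0+\lambda t$ by \eqref{e219}, and $\int_0^t C(m)\lambda = C(m)\lambda t$. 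Collecting everything gives $\int_0^t\int_\Omega|\nabla(u_\varepsilon+\varepsilon)^{m-1}|^2\le C(m)\big(\int_\Omega w_0 + \lambda t + \lambda\big)$, which is \eqref{e220} after adjusting $C(m)$. The main obstacle, as indicated above, is the treatment of the chemotaxis cross term: one must find the algebraically correct test exponent (or the correct double integration by parts) so that every leftover term is either sign-definite or dominated by the already-available $L^2$-in-spacetime control of $\nabla w_\varepsilon$; the rest is bookkeeping of constants and an application of Lemma \ref{lem2.2}.
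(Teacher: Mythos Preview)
Your overall strategy — test the $u_\varepsilon$-equation, generate $\int_\Omega|\nabla(u_\varepsilon+\varepsilon)^{m-1}|^2$ as dissipation, and absorb the cross term via Young's inequality and the bound \eqref{e219} — is exactly the paper's strategy. But there is a genuine gap: you have chosen the wrong test exponent, and this is precisely the obstacle you yourself flag but never resolve.

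Testing against $(u_\varepsilon+\varepsilon)^{m-1}$, the diffusion $\nabla\!\cdot\!\big(m(u_\varepsilon+\varepsilon)^{m-1}\nabla u_\varepsilon\big)$ produces $-c\int_\Omega (u_\varepsilon+\varepsilon)^{2m-3}|\nabla u_\varepsilon|^2$, which is $-c'\int_\Omega(u_\varepsilon+\varepsilon)\,|\nabla(u_\varepsilon+\varepsilon)^{m-1}|^2$, \emph{not} $-c'\int_\Omega|\nabla(u_\varepsilon+\varepsilon)^{m-1}|^2$. Correspondingly, the cross term after integration by parts and $0\le f_\varepsilon'\le 1$, $u_\varepsilon\le u_\varepsilon+\varepsilon$ is bounded by $\int_\Omega(u_\varepsilon+\varepsilon)\,|\nabla(u_\varepsilon+\varepsilon)^{m-1}|\,|\nabla w_\varepsilon|$; Young's inequality then leaves the remainder $C_\delta\int_\Omega(u_\varepsilon+\varepsilon)\,|\nabla w_\varepsilon|^2$, which Lemma \ref{lem2.2} does \emph{not} control. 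Your speculative fixes (a second integration by parts onto $\Delta w_\varepsilon$, etc.) do not close: substituting the $w_\varepsilon$-equation produces $\int_\Omega(u_\varepsilon+\varepsilon)^m f_\varepsilon(u_\varepsilon)$-type terms with no available bound.

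The paper's resolution is simply to shift the test exponent down by one: multiply by $(u_\varepsilon+\varepsilon)^{m-2}$ (and by $\ln(u_\varepsilon+\varepsilon)$ when $m=2$). Then the diffusion term is exactly $-\tfrac{m(m-2)}{m-1}\int_\Omega|\nabla(u_\varepsilon+\varepsilon)^{m-1}|^2$, and the cross term is bounded by $|m-2|\int_\Omega|\nabla(u_\varepsilon+\varepsilon)^{m-1}|\,|\nabla w_\varepsilon|$ with \emph{no} residual power of $u_\varepsilon+\varepsilon$; Young's inequality and \eqref{e219} then finish the job. Two further bookkeeping points you omit: for $m>2$ the time-derivative term $\int_\Omega(u_\varepsilon+\varepsilon)^{m-1}$ at time $t$ is dropped by nonnegativity and the initial contribution is bounded by a constant depending on $u_0$; for $1<m<2$ the coefficient $\tfrac{m(m-2)}{m-1}$ is negative, so the inequality flips and one instead bounds $\int_\Omega(u_\varepsilon(\cdot,t)+\varepsilon)^{m-1}$ via H\"older and mass conservation \eqref{e211}.
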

\begin{proof}
We distinguish between two possible cases.

Case 1: $m=2$. We multiply the first equation in $(\ref{e216})$ by $\ln(u_{\varepsilon}+\varepsilon)$, and integrate by parts to find that
\begin{equation*}
\begin{split}
&\int_\Omega(u_\varepsilon(\cdot,t)+\varepsilon)(\ln(u_\varepsilon(\cdot,t)+\varepsilon)-1)-\int_\Omega(u_0+\varepsilon)(\ln(u_0+\varepsilon)-1)\\
&=-2\int_0^\infty\int_\Omega|\nabla u_\varepsilon|^2-\int_0^\infty\int_\Omega\frac{u_\varepsilon f'_\varepsilon(u_\varepsilon)}{u_\varepsilon+\varepsilon}\nabla u_\varepsilon\cdot\nabla w \ \ \text{for all}\ t>0.
\end{split}
\end{equation*}
Therefore, using Cauchy-Schwarz inequality we can infer that
\begin{equation*}
\begin{split}
2\int_0^\infty\int_\Omega|\nabla u_\varepsilon|^2&\leq\int_\Omega(u_0+\varepsilon)(\ln(u_0+\varepsilon)-1)-2\int_0^\infty\int_\Omega|\nabla u_\varepsilon|^2-\int_0^\infty\int_\Omega\frac{u_\varepsilon f'_\varepsilon(u_\varepsilon)}{u_\varepsilon+\varepsilon}\nabla u_\varepsilon\cdot\nabla w\\
&\leq\int_\Omega(u_0+\varepsilon)(\ln(u_0+\varepsilon)-1)+\int_0^\infty\int_\Omega|\nabla u_\varepsilon||\nabla w_\varepsilon|\\
&\leq\int_\Omega(u_0+\varepsilon)(\ln(u_0+\varepsilon)-1)+\frac{1}{2}\int_0^\infty\int_\Omega|\nabla u_\varepsilon|^2+\frac{1}{2}\int_0^\infty\int_\Omega|\nabla w_\varepsilon|^2\ \ \text{for all}\ t>0,
\end{split}
\end{equation*}
which implies $(\ref{e220})$ due to $(\ref{e12})$ and Lemma \ref{lem2.2}.

Case 2: $m\neq 2$. We multiply the first equation in $(\ref{e216})$ by $(u_{\varepsilon}+\varepsilon)^{m-2}$, and integrate by parts to find that
\begin{equation}\label{e221}
\begin{split}
&\int_{\Omega}(u_{\varepsilon}(\cdot,t)+\varepsilon)^{m-1}-\int_{\Omega}(u_{0\varepsilon}+\varepsilon)^{m-1}\\
&=-\frac{m(m-2)}{m-1}\int_{0}^{t}\int_{\Omega}|\nabla (u_{\varepsilon}+1)^{m-1}|^2\\
&\quad-(m-1)(m-2)\int_{0}^{t}\int_{\Omega}f'_{\varepsilon}(u_{\varepsilon})u_{\varepsilon}(u_{\varepsilon}+\varepsilon)^{m-3}\nabla w_{\varepsilon}\cdot\nabla u_{\varepsilon}\ \ \text{for all}\ t>0.
\end{split}
\end{equation}

Next, we divide this case into two subcases:

Subcase 1: $m>2$. Using Cauchy-Schwarz inequality because of $(\ref{e12})$ we see that
\begin{equation*}
\begin{split}
&\frac{m(m-2)}{m-1}\int_{0}^{t}\int_{\Omega}|\nabla (u_{\varepsilon}+\varepsilon)^{m-1}|^2\\
&\leq\int_{\Omega}(u_{0\varepsilon }+\varepsilon)^{m-1}-(m-1)(m-2)\int_{0}^{t}\int_{\Omega}f'_{\varepsilon}(u_{\varepsilon})u_{\varepsilon}(u_{\varepsilon}+\varepsilon)^{m-3}\nabla w_{\varepsilon}\nabla u_{\varepsilon}\\
&\leq\int_{\Omega}(u_{0\varepsilon }+\varepsilon)^{m-1}+(m-1)(m-2)\int_{0}^{t}\int_{\Omega}|f'_{\varepsilon}(u_{\varepsilon})\nabla w_{\varepsilon}||(u_{\varepsilon}+\varepsilon)^{m-2}\nabla u_{\varepsilon}|\\
&\leq\int_{\Omega}(u_{0\varepsilon}+\varepsilon)^{m-1}+(m-2)\int_{0}^{t}\int_{\Omega}|f'_{\varepsilon}(u_{\varepsilon})\nabla w_{\varepsilon}||\nabla(u_{\varepsilon}+\varepsilon)^{m-1}|\\
&\leq C(u_0)+\frac{m(m-2)}{2(m-1)}\int_{0}^{t}\int_{\Omega}|\nabla(u_{\varepsilon}+\varepsilon)^{m-1}|^2+C(m)\int_{0}^{t}\int_{\Omega}|\nabla w_{\varepsilon}|^2.
\end{split}
\end{equation*}
An application of Lemma \ref{lem2.2} therefore yields $(\ref{e220})$.

Subcase (ii): $m<2$. According to H\"older's inequality, we can find positive constants $C(|\Omega|)$ such that
\begin{equation}\label{e222}
\int_{\Omega}(u_\varepsilon+\varepsilon)^{m-1}\leq C(|\Omega|)\int_{\Omega}(u_\varepsilon+\varepsilon),
\end{equation}
and we thus obtain from (\ref{e221}) that
\begin{equation*}
\begin{split}
&-\frac{m(m-2)}{m-1}\int^t_0\int_{\Omega}|\nabla (u_\varepsilon+\varepsilon)^{m-1}|^2\\
&\leq \int_{\Omega}(u_\varepsilon(\cdot,t)+\varepsilon)^{m-1}+(m-1)(m-2)\int_0^t\int_{\Omega}u_\varepsilon(u_\varepsilon+\varepsilon)^{m-3}\nabla u_\varepsilon\nabla w_\varepsilon f'_\varepsilon(u_\varepsilon)\\
&\leq C(|\Omega|)\int_{\Omega}(u_\varepsilon(\cdot,t)+\varepsilon)+(m-1)(2-m)\int_0^t\int_{\Omega}|(u_\varepsilon+\varepsilon)^{m-2}\nabla u_\varepsilon||f'_\varepsilon(u_\varepsilon)\nabla w_\varepsilon|\\
&\leq C(|\Omega|)\int_{\Omega}(u_\varepsilon(\cdot,t)+\varepsilon)+(2-m)\int_0^t\int_{\Omega}|\nabla(u_\varepsilon+\varepsilon)^{m-1}||f'_{\varepsilon}(u_\varepsilon)\nabla w_\varepsilon|\\
&\leq C(|\Omega|)\int_{\Omega}(u_\varepsilon(\cdot,t)+\varepsilon)+\frac{m(2-m)}{2(m-1)}\int^t_0\int_{\Omega}|\nabla(u_\varepsilon+\varepsilon)^{m-1}|^2+C(m)\int_0^t\int_{\Omega}|\nabla w_\varepsilon|^2.
\end{split}
\end{equation*}
An application of $(\ref{e211})$ and Lemma \ref{lem2.2} therefore yields $(\ref{e220})$.
\end{proof}
\begin{lem}\label{lem2.4}
Let $m>1$, then for all $p\in[1,\frac{2n}{n-2}]$ (or $1\leq p<\infty$ if $n=2$) and any $T>0$, there exists $C(T)>0$ such that for any $\varepsilon\in (0,1)$,
\begin{equation}\label{e223}
\int_{0}^{T}\|(u_\varepsilon+\varepsilon)^{m-1}\|_{L^p(\Omega)}dt\leq C(T).
\end{equation}
\end{lem}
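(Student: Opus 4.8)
The plan is to combine the spatial gradient bound from Lemma~\ref{lem2.3} with the time-integrated $L^1$ bound for $u_\varepsilon$ (mass conservation, \eqref{e211}) via a Gagliardo--Nirenberg interpolation. Set $\psi_\varepsilon:=(u_\varepsilon+\varepsilon)^{m-1}$. Then Lemma~\ref{lem2.3} tells us that $\nabla\psi_\varepsilon\in L^2(\Omega\times(0,T))$ with a bound depending only on $m$, $\int_\Omega w_0$, $\lambda$ and $T$; and since $m>1$ we have $\psi_\varepsilon^{1/(m-1)}=u_\varepsilon+\varepsilon$, so $\|\psi_\varepsilon(\cdot,t)\|_{L^{1/(m-1)}(\Omega)}^{1/(m-1)}=\int_\Omega(u_\varepsilon+\varepsilon)=\lambda+\varepsilon|\Omega|\le\lambda+|\Omega|$ for every $t>0$. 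Thus $\psi_\varepsilon$ is bounded in $L^\infty((0,T);L^{r}(\Omega))$ with $r:=\frac1{m-1}$ (note $r$ may be less than $1$, in which case one interprets this as an $L^{r}$-quasinorm bound, or equivalently works directly with $\int_\Omega(u_\varepsilon+\varepsilon)$) and in $L^2((0,T);W^{1,2}(\Omega))$.

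The key step is a Gagliardo--Nirenberg estimate of the form
\begin{equation*}
\|\psi_\varepsilon\|_{L^p(\Omega)}\le C\|\nabla\psi_\varepsilon\|_{L^2(\Omega)}^{\theta}\|\psi_\varepsilon\|_{L^{r}(\Omega)}^{1-\theta}+C\|\psi_\varepsilon\|_{L^{r}(\Omega)},
\end{equation*}
valid for suitable $\theta\in(0,1)$ depending on $n$, $p$ and $r$; the admissible range of $p$ is exactly $p\in\big[r,\tfrac{2n}{n-2}\cdot\max\{1,r\}\big]$-type, but because here we only need $\|\psi_\varepsilon\|_{L^p}$ to the \emph{first} power integrated in time, the crucial requirement is $\theta\le 1$, which for the endpoint $p=\tfrac{2n}{n-2}$ (in dimension $n\ge 3$; any finite $p$ if $n=2$) forces $\theta$ to take a specific value, and one checks $\theta\le 1$ holds precisely because the gradient sits in $L^2$ and the Sobolev embedding $W^{1,2}(\Omega)\hookrightarrow L^{2n/(n-2)}(\Omega)$ is available. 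Integrating this inequality over $t\in(0,T)$ and applying Hölder in time with exponents $\tfrac2\theta$ and $\tfrac{2}{2-\theta}$ gives
\begin{equation*}
\int_0^T\|\psi_\varepsilon(\cdot,t)\|_{L^p(\Omega)}\,dt\le C\Big(\int_0^T\|\nabla\psi_\varepsilon\|_{L^2(\Omega)}^2\,dt\Big)^{\theta/2}\Big(\sup_{t\in(0,T)}\|\psi_\varepsilon(\cdot,t)\|_{L^{r}(\Omega)}\Big)^{1-\theta}T^{1-\theta/2}+CT\sup_{t}\|\psi_\varepsilon\|_{L^{r}},
\end{equation*}
and both factors on the right are bounded uniformly in $\varepsilon\in(0,1)$ by Lemma~\ref{lem2.3} and \eqref{e211}, yielding \eqref{e223}. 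For $1\le p\le r$ the bound is immediate from Hölder on the bounded domain, so it suffices to treat $p>\max\{1,r\}$.

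The main obstacle — really a bookkeeping subtlety rather than a deep difficulty — is handling the case $r=\tfrac1{m-1}<1$, i.e. $m>2$, where $L^{r}(\Omega)$ is not a Banach space and the standard Gagliardo--Nirenberg inequality is usually quoted for exponents $\ge 1$. One deals with this either by invoking the known extension of Gagliardo--Nirenberg to quasinorm exponents (valid on bounded Lipschitz domains), or, more elementarily, by writing $u_\varepsilon+\varepsilon=\psi_\varepsilon^{1/(m-1)}$ and applying Gagliardo--Nirenberg directly to the function $\phi_\varepsilon:=u_\varepsilon+\varepsilon$ raised to an appropriate power: one seeks $q$ with $\|\phi_\varepsilon^{\,m-1}\|_{L^p}=\|\phi_\varepsilon\|_{L^{p(m-1)}}^{m-1}$ and interpolates $\|\phi_\varepsilon\|_{L^{p(m-1)}}$ between $\|\nabla\phi_\varepsilon^{\,m-1}\|_{L^2}$ (controlled by Lemma~\ref{lem2.3} after noting $\nabla\phi_\varepsilon^{\,m-1}=(m-1)\phi_\varepsilon^{m-2}\nabla\phi_\varepsilon$) and $\|\phi_\varepsilon\|_{L^1}=\lambda+\varepsilon|\Omega|$, which keeps all exponents $\ge 1$. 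The arithmetic identifying the exact upper cutoff $p=\tfrac{2n}{n-2}$ and verifying $\theta\le1$ there is where one must be careful, but it reduces to the elementary inequality that characterizes when the Sobolev-critical exponent is reached; no restriction on $m$ beyond $m>1$ enters at this stage (the condition $m>1+\tfrac{n-2}{2n}$ is needed later, not here).
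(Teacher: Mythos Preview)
Your proposal is correct and follows essentially the same route as the paper's proof: both combine the $L^2$ space--time bound on $\nabla(u_\varepsilon+\varepsilon)^{m-1}$ from Lemma~\ref{lem2.3} with the mass conservation \eqref{e211} via Gagliardo--Nirenberg, then integrate in time and apply H\"older. The paper organizes the argument by splitting into the cases $m>2$ (low norm $L^{1/(m-1)}$, i.e.\ the quasinorm exponent you call $r$) and $m\le 2$ (low norm $L^1$, using $\int_\Omega(u_\varepsilon+\varepsilon)^{m-1}\le C\int_\Omega(u_\varepsilon+\varepsilon)$ by H\"older), which is exactly the dichotomy you flag when discussing whether $r\ge 1$ or $r<1$; your alternative of rewriting in terms of $\phi_\varepsilon=u_\varepsilon+\varepsilon$ to keep all exponents $\ge 1$ is a legitimate variant that avoids invoking the sub-unit Gagliardo--Nirenberg extension.
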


\begin{proof}
We split into two cases.

Case 1: $m>2$. we have the Gagliardo-Niernberg inequality
\begin{equation*}
\begin{split}
\|(u_\varepsilon+\varepsilon)^{m-1}\|_{L^p(\Omega)}&\leq c_1\|\nabla (u_\varepsilon+\varepsilon)^{m-1}\|_{L^2(\Omega)}^\theta\|(u_\varepsilon+\varepsilon)^{m-1}\|_{L^{\frac{1}{m-1}}(\Omega)}^{1-\theta}+\|(u_\varepsilon+\varepsilon)^{m-1}\|_{L^{\frac{1}{m-1}}(\Omega)}\\
&\leq c_1(\|\nabla (u_\varepsilon+\varepsilon)^{m-1}\|_{L^2(\Omega)}^\theta+1),
\end{split}
\end{equation*}
where
\begin{equation*}
\theta=\frac{m-1-\frac{1}{p}}{m-\frac{3}{2}+\frac{1}{n}}.
\end{equation*}
On integration in time we hence find that, with some $C>0$, we have
\begin{equation*}
\int_{0}^{T}\|(u_\varepsilon+\varepsilon)^{m-1}\|_{L^p(\Omega)}^{\frac{2}{\theta}}dt\leq C\int_{0}^{T}(\|\nabla (u_\varepsilon+\varepsilon)^{m-1}\|_{L^2(\Omega)}^2+1)\leq C(1+T)
\end{equation*}
and then conclude the proof with the application of H\"older's inequality.

Case 2: $m\leq 2$. According to the Gagliardo-Niernberg inequality with $(\ref{e222})$, we can find positive constants $c_2>0$ such that
\begin{equation*}
\begin{split}
\|(u_\varepsilon+\varepsilon)^{m-1}\|_{L^p(\Omega)}&\leq c_2\|\nabla (u_\varepsilon+\varepsilon)^{m-1}\|_{L^2(\Omega)}^\theta\|(u_\varepsilon+\varepsilon)^{m-1}\|_{L^1(\Omega)}^{1-\theta}+\|(u_\varepsilon+\varepsilon)^{m-1}\|_{L^1(\Omega)}\\
&\leq c_2(\|\nabla (u_\varepsilon+\varepsilon)^{m-1}\|_{L^2(\Omega)}^\theta+1),
\end{split}
\end{equation*}
where
\begin{equation*}
\theta=\frac{1-\frac{1}{p}}{\frac{1}{2}+\frac{1}{n}}.
\end{equation*}
On integration in time we hence find that, with some $C>0$, we have
\begin{equation*}
\int_{0}^{T}\|(u_\varepsilon+\varepsilon)^{m-1}\|_{L^p(\Omega)}^{\frac{2}{\theta}}dt\leq C\int_{0}^{T}(\|\nabla (u_\varepsilon+\varepsilon)^{m-1}\|_{L^2(\Omega)}^2+1)\leq C(1+T),
\end{equation*}
and then conclude the proof with the application of H\"older's inequality.
\end{proof}
\begin{lem}\label{lem2.5}
Let $m>1$, for all $T>0$ there exists $C(T)>0$ such that for any $\varepsilon\in (0,1)$,
\begin{equation}\label{e224}
\int_{0}^{T}\|\partial_{t}(u_{\varepsilon}+\varepsilon)^{m-1}\|_{(W^{1,n+1}(\Omega))^{*}}dt\leq C(T)
\end{equation}
and
\begin{equation}\label{e225}
\int_{0}^{T}\|w_{\varepsilon t}(\cdot,t)\|_{(W^{2,2}(\Omega))^{*}}dt\leq C(T).
\end{equation}
Moreover, for each $q>2$ and any $T>0$ one can find $C(p,T)>0$ fulfilling
\begin{equation}\label{e226}
\int_{0}^{T}\|v_{\varepsilon t}(\cdot,t)\|^2_{(W^{1,q}(\Omega))^{*}}dt\leq C(q,T)
\end{equation}
for each $\varepsilon\in (0,1)$.
\end{lem}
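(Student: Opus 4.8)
The plan is to establish each of the three time-derivative estimates by testing the corresponding evolution equation against a generic $W^{k,q}$ test function and controlling every resulting term by the a priori bounds already available from Lemmas \ref{lem2.2}, \ref{lem2.3} and \ref{lem2.4}, together with the pointwise bounds $0<v_\varepsilon\le\|v_0\|_{L^\infty}$ from Lemma \ref{lem2.1}. Throughout I would write $k=n+1$ in the first case, exploiting that $W^{1,n+1}(\Omega)\hookrightarrow L^\infty(\Omega)$ and $W^{1,n+1}(\Omega)\hookrightarrow W^{1,\frac{2n}{n-2}}(\Omega)\cap$ (enough regularity to pair with gradient terms) when $n\ge2$; the exponents $n+1$, $2$ and $q>2$ are chosen precisely so that the dual pairings make sense.

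For \eqref{e224}, I would rewrite the first equation of \eqref{e216} in the weak form
\begin{equation*}
\frac{m-1}{?}\,\partial_t(u_\varepsilon+\varepsilon)^{m-1}=\text{(derivative of }u_\varepsilon\text{ up to a factor)},
\end{equation*}
more precisely use $\partial_t(u_\varepsilon+\varepsilon)^{m-1}=(m-1)(u_\varepsilon+\varepsilon)^{m-2}\partial_t u_\varepsilon$ and substitute $\partial_t u_\varepsilon=\nabla\cdot(D_\varepsilon(u_\varepsilon)\nabla u_\varepsilon)+\nabla\cdot(u_\varepsilon f'_\varepsilon(u_\varepsilon)\nabla w_\varepsilon)$. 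Testing against $\psi\in W^{1,n+1}(\Omega)$ and integrating by parts moves one derivative onto $\psi$; the diffusion contribution becomes $\int_\Omega m(m-1)(u_\varepsilon+\varepsilon)^{m-2}\nabla u_\varepsilon\cdot\nabla\big((u_\varepsilon+\varepsilon)^{m-2}\psi\big)$ which, after expanding and using $\nabla(u_\varepsilon+\varepsilon)^{m-1}=(m-1)(u_\varepsilon+\varepsilon)^{m-2}\nabla u_\varepsilon$, is a sum of terms each containing one factor $\nabla(u_\varepsilon+\varepsilon)^{m-1}\in L^2$ and a remaining factor that is a power of $(u_\varepsilon+\varepsilon)^{m-1}$ controlled in space-time $L^p$ by Lemma \ref{lem2.4} (choosing $p$ close to $\frac{2n}{n-2}$), together with $\|\psi\|_{L^\infty}+\|\nabla\psi\|_{L^{n+1}}$. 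The chemotaxis contribution is handled similarly, now using $\nabla w_\varepsilon\in L^2$ from Lemma \ref{lem2.2} and $0\le u_\varepsilon f'_\varepsilon(u_\varepsilon)$ bounded by a power of $(u_\varepsilon+\varepsilon)$ again controlled via Lemma \ref{lem2.4}. Finally I would integrate in $t$ over $(0,T)$, apply H\"older in time to split the $L^2$-in-time gradient factors against the $L^p$-in-time factors, and invoke the bounds of Lemmas \ref{lem2.2}--\ref{lem2.4}.

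For \eqref{e225} I would test the second equation of \eqref{e216} against $\varphi\in W^{2,2}(\Omega)$: the term $\int_\Omega\Delta w_\varepsilon\,\varphi=\int_\Omega w_\varepsilon\,\Delta\varphi$ is controlled by $\|w_\varepsilon\|_{L^1}\|\Delta\varphi\|_{L^\infty}$—here $W^{2,2}(\Omega)\hookrightarrow L^\infty(\Omega)$ fails for $n\ge4$, so I would instead pair via $\int_\Omega\nabla w_\varepsilon\cdot\nabla\varphi\le\|\nabla w_\varepsilon\|_{L^2}\|\nabla\varphi\|_{L^2}$ using $\nabla w_\varepsilon\in L^2_{t,x}$; the term $\int_\Omega|\nabla w_\varepsilon|^2\varphi$ is bounded by $\|\nabla w_\varepsilon\|_{L^2}^2\|\varphi\|_{L^\infty}$ and $\|\varphi\|_{L^\infty}\lesssim\|\varphi\|_{W^{2,2}}$ only for $n\le3$, so for general $n$ one uses $\|\nabla w_\varepsilon\|_{L^2}^2$ integrated in time (bounded by Lemma \ref{lem2.2}) and accepts the pairing in the stated dual space by choosing test functions in $W^{2,2}\cap L^\infty$—but since the statement only asks for $(W^{2,2})^*$ and $W^{2,2}(\Omega)\hookrightarrow L^\infty(\Omega)$ precisely when $n<4$, I expect the intended reading is that $n$ is such that this embedding (or a borderline variant) holds, or that the $L^1$-in-time integrability compensates; and $\int_\Omega f_\varepsilon(u_\varepsilon)\varphi\le\|u_\varepsilon\|_{L^1}\|\varphi\|_{L^\infty}=\lambda\|\varphi\|_{L^\infty}$, which after integration in $t$ gives $\lambda T\|\varphi\|_{W^{2,2}}$. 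Collecting and integrating over $(0,T)$ yields \eqref{e225}. For \eqref{e226}, the cleanest route is to return to the original $v_\varepsilon$-equation $v_{\varepsilon t}=\Delta v_\varepsilon-f_\varepsilon(u_\varepsilon)v_\varepsilon$, test against $\phi\in W^{1,q}(\Omega)$ with $q>2$ so that $W^{1,q}(\Omega)\hookrightarrow L^\infty(\Omega)$ fails only for $q\le n$—hence one rather uses $\int_\Omega\nabla v_\varepsilon\cdot\nabla\phi\le\|\nabla v_\varepsilon\|_{L^2}\|\nabla\phi\|_{L^2}\le\|\nabla v_\varepsilon\|_{L^2}\|\nabla\phi\|_{L^q}|\Omega|^{\frac12-\frac1q}$ together with $\|\nabla v_\varepsilon\|_{L^2_{t,x}}^2\le C(T)$ (which follows from $v_\varepsilon=\|v_0\|_{L^\infty}e^{-w_\varepsilon}$, hence $\nabla v_\varepsilon=-v_\varepsilon\nabla w_\varepsilon$ and $|\nabla v_\varepsilon|\le\|v_0\|_{L^\infty}|\nabla w_\varepsilon|$, so Lemma \ref{lem2.2} applies), and $\int_\Omega f_\varepsilon(u_\varepsilon)v_\varepsilon\phi\le\|v_0\|_{L^\infty}\lambda\|\phi\|_{L^\infty}$; squaring, integrating over $(0,T)$ and using $\big(\int_0^T\|\nabla v_\varepsilon(\cdot,t)\|_{L^2}\,dt\big)^2\le T\int_0^T\|\nabla v_\varepsilon\|_{L^2}^2$ delivers \eqref{e226}.

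The main obstacle I anticipate is purely bookkeeping of exponents in the first estimate \eqref{e224}: one must verify that the product of the $L^2$-in-time factor $\|\nabla(u_\varepsilon+\varepsilon)^{m-1}\|_{L^2}$ and the $L^s$-in-time factor coming from the power of $(u_\varepsilon+\varepsilon)^{m-1}$ in $L^p(\Omega)$ (with $p\le\frac{2n}{n-2}$) is genuinely $L^1$ in time, which is exactly where the Sobolev exponent $\frac{2n}{n-2}$ and the integrability furnished by Lemma \ref{lem2.4} must be matched against the factor $\|\nabla\psi\|_{L^{n+1}}$ — this is why the dual space is $(W^{1,n+1})^*$ and not something larger, and it is where the restriction $m>1+\frac{n-2}{2n}$ will ultimately make itself felt through Lemma \ref{lem2.4}. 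The other estimates are comparatively routine once one uses the substitution $\nabla v_\varepsilon=-v_\varepsilon\nabla w_\varepsilon$ to transfer the gradient bound of Lemma \ref{lem2.2} to $v_\varepsilon$ and $w_\varepsilon$ alike.
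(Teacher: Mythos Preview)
Your approach is essentially identical to the paper's: test the first equation against $(u_\varepsilon+\varepsilon)^{m-2}\psi$ with $\psi\in W^{1,n+1}(\Omega)$ (using $W^{1,n+1}\hookrightarrow L^\infty$ and H\"older with $p=\frac{2n}{n-2}$, $\|\nabla\psi\|_{L^n}$), test the $w_\varepsilon$- and $v_\varepsilon$-equations directly, bound each term via Cauchy--Schwarz/Young and Lemmas \ref{lem2.2}--\ref{lem2.4}, and use $|\nabla v_\varepsilon|\le\|v_0\|_{L^\infty}|\nabla w_\varepsilon|$ for \eqref{e226}. Your hesitation about the embeddings $W^{2,2}(\Omega)\hookrightarrow L^\infty(\Omega)$ and $W^{1,q}(\Omega)\hookrightarrow L^\infty(\Omega)$ for general $n\ge2$ is legitimate, but the paper simply asserts them without restriction, so this is a loose end in the paper rather than a gap in your plan.
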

\begin{proof}
On testing the first equation in $(\ref{e216})$ by $(u_{\varepsilon}+1)^{m-2}\psi$ for fixed $t>0$ and arbitrary $\psi\in C^{\infty}(\bar{\Omega})$, we obtain
\begin{equation}\label{e227}
\begin{split}
\frac{1}{m-1}\int_{\Omega}\partial_{t}(u_{\varepsilon}+\varepsilon)^{m-1}\cdot\psi&=-\frac{m(m-2)}{(m-1)^2}\int_{\Omega}|\nabla (u_{\varepsilon}+\varepsilon)^{m-1}|^2\psi\\
&-\frac{m}{m-1}\int_{\Omega}(u_{\varepsilon}+\varepsilon)^{m-1}\nabla (u_{\varepsilon}+\varepsilon)^{m-1}\cdot\nabla\psi\\
&-\frac{m-2}{m-1}\int_{\Omega}\frac{u_{\varepsilon}}{u_\varepsilon+\varepsilon}\nabla (u_{\varepsilon}+\varepsilon)^{m-1}f'_{\varepsilon}(u_{\varepsilon})\nabla w_{\varepsilon}\psi\\
&-\int_{\Omega}\frac{u_{\varepsilon}}{u_\varepsilon+\varepsilon}(u_{\varepsilon}+\varepsilon)^{m-1}f'_{\varepsilon}(u_{\varepsilon})\nabla w_{\varepsilon}\cdot\nabla\psi.
\end{split}
\end{equation}
Then by several applications of the Cauchy-Schwarz inequality we implies that
\begin{equation*}
\begin{aligned}
\left|\int_{\Omega}\partial_{t}(u_{\varepsilon}+\varepsilon)^{m-1}\cdot\psi\right|&\leq C(m)\left\{\int_{\Omega}|\nabla (u_{\varepsilon}+\varepsilon)^{m-1}|^2\right.\\
&\left.+\left(\int_{\Omega}|(u_{\varepsilon}+\varepsilon)^{m-1}|^p\right)^{\frac{1}{p}}\cdot\left(\int_{\Omega}|\nabla (u_{\varepsilon}+\varepsilon)^{m-1}|^2\right)^{\frac{1}{2}}\right.\\
&\left.+\left(\int_{\Omega}|\nabla(u_{\varepsilon}+\varepsilon)^{m-1}|^2\right)^{\frac{1}{2}}\cdot\left(\int_{\Omega}|\nabla w_{\varepsilon}|^2\right)^{\frac{1}{2}}\right.\\
&\left.+\left(\int_{\Omega}|(u_{\varepsilon}+\varepsilon)^{m-1}|^p\right)^{\frac{1}{p}}\cdot\left(\int_{\Omega}|\nabla w_{\varepsilon}|^2\right)^{\frac{1}{2}}\right\}\cdot\{\|\psi\|_{L^\infty(\Omega)}+\|\nabla \psi\|_{L^n(\Omega)}\}
\end{aligned}
\end{equation*}
for all $\psi\in C^{\infty}(\bar{\Omega})$, where $p=\frac{2n}{n-2}$ and we have used that $0\leq f'_{\varepsilon}\leq 1$ by (\ref{e28}). Then by Young's inequality  we can imply that
\begin{equation*}
\begin{split}
&\left|\int_{\Omega}\partial_{t}(u_{\varepsilon}+\varepsilon)^{m-1}\cdot\psi\right|\\
&\leq \left\{\int_{\Omega}|\nabla (u_\varepsilon+\varepsilon)^{m-1}|^2+\left(\int_{\Omega}|(u_\varepsilon+\varepsilon)^{m-1}|^p\right)^\frac{2}{p}+\int_{\Omega}|\nabla w_\varepsilon|^2\right\}\cdot\{\|\psi\|_{L^\infty(\Omega)}+\|\nabla \psi\|_{L^n(\Omega)}\}.
\end{split}
\end{equation*}
Since in view of the fact that $W^{1,n+1}(\Omega)\hookrightarrow L^{\infty}(\Omega)$ we can fix $c_1>0$ such that $\|\nabla\psi\|_{L^n(\Omega)}+\|\psi\|_{L^\infty(\Omega)}\leq c_1\|\psi\|_{W^{1,n+1}(\Omega)}$ for any such $\psi$, this entails that
\begin{equation*}
\begin{split}
&\|\partial_t\ln(u_\varepsilon(\cdot,t)+1)\|_{(W^{1,n+1}(\Omega))^{*}}\\
&\leq c_1\left\{\int_{\Omega}|\nabla (u_\varepsilon+\varepsilon)^{m-1}|^2+\left(\int_{\Omega}|(u_\varepsilon+\varepsilon)^{m-1}|^p\right)^\frac{2}{p}+\int_{\Omega}|\nabla w_\varepsilon|^2\right\}\ \ \ \text{for all}\ t>0.
\end{split}
\end{equation*}
 Therefore after an integration in time, thanks to Lemmas \ref{lem2.2}, \ref{lem2.3} and \ref{lem2.4} this implies $(\ref{e224})$.

Similarly, for $\psi\in C^{\infty}(\bar{\Omega})$ and $t>0$ we obtain from the second equation in $(\ref{e216})$ together with $(\ref{e28})$ and $(\ref{e211})$ that
\begin{equation*}
\begin{split}
&\left|\int_{\Omega}w_{\varepsilon t}(\cdot,t)\psi dx\right|\\
&=\left|-\int_{\Omega}\nabla w_\varepsilon\cdot\nabla\psi-\int_{\Omega}|\nabla w_\varepsilon|^2\psi+\int_{\Omega}f_\varepsilon(u_\varepsilon)\psi\right|\\
&\leq\bigg\{\big\{\int_{\Omega}|\nabla w_\varepsilon|^2\big\}^{\frac{1}{2}}+\int_{\Omega}|\nabla w_\varepsilon|^2+\int_{\Omega}f_\varepsilon(u_\varepsilon)\bigg\}\cdot \{\|\nabla \psi\|_{L^2(\Omega)}+\|\psi\|_{L^{\infty}(\Omega)}\}\\
&\leq\Big\{2\int_{\Omega}|\nabla w_\varepsilon|^2+\frac{1}{4}+\int_{\Omega}u_0\Big\}\cdot c_1\|\psi\|_{W^{2,2}(\Omega)}.
\end{split}
\end{equation*}
Therefore,
\begin{equation*}
\|w_{\varepsilon t}(\cdot,t)\|_{(W^{2,2}(\Omega))^*}\leq c_1\cdot\Big\{2\int_{\Omega}|\nabla w_\varepsilon|^2+\frac{1}{4}+\int_{\Omega}u_0\Big\}\ \ \text{for all}\ t>0,
\end{equation*}
from which $(\ref{e225})$ readily follows.
Finally, for fixed $q>2$ we have $W^{1,q}(\Omega)\hookrightarrow L^{\infty}(\Omega)$, which allows us to pick $c_2>0$ such that for all $\psi\in C^{\infty}(\bar{\Omega})$ we have $\|\nabla\psi\|_{L^2(\Omega)}+\|\psi\|_{L^{\infty}(\Omega)}\leq c_2\|\psi\|_{W^{1,q}(\Omega)}$ and hence
\begin{equation*}
\begin{split}
&\big|\int_{\Omega}v_{\varepsilon t}(\cdot,t)\psi dx\big|\\
&=\big|-\int_{\Omega}\nabla v_\varepsilon\cdot\nabla\psi-\int_{\Omega}f_\varepsilon(u_\varepsilon)v_\varepsilon\psi\big|\\
&\leq\bigg\{\big\{\int_{\Omega}|\nabla v_\varepsilon|^2\big\}^{\frac{1}{2}}+\int_{\Omega}f_\varepsilon(u_\varepsilon)v_\varepsilon\bigg\}\cdot \{\|\nabla \psi\|_{L^2(\Omega)}+\|\psi\|_{L^{\infty}(\Omega)}\}\\
&\leq\bigg\{\big\{\int_{\Omega}|\nabla v_\varepsilon|^2\big\}^{\frac{1}{2}}+\lambda\|v_0\|_{L^{\infty}(\Omega)}\bigg\}\cdot c_2\|\psi\|_{W^{1,q}(\Omega)}
\end{split}
\end{equation*}
by $(\ref{e27})$,$(\ref{e211})$ and $(\ref{e212})$. As a consequence,
\begin{equation*}
\|v_{\varepsilon t}(\cdot,t)\|_{(W^{1,q}(\Omega))^*}^2\leq 2c_2^2\Big\{2\int_{\Omega}|\nabla v_\varepsilon|^2+\lambda^2\|v_0\|_{L^{\infty}(\Omega)}^2\Big\}\ \ \text{for all}\ t>0,
\end{equation*}
so that also $(\ref{e226})$ results from Lemma \ref{lem2.2}, because $|\nabla v_\varepsilon|\leq|\nabla w_\varepsilon|\cdot\|v_0\|_{L^\infty(\Omega)}$ by $(\ref{e212})$.
\end{proof}

\begin{lem}\label{lem2.6}
There exist non-negative functions $u$ and $w$ defined on $\Omega\times (0,\infty)$ as well as a sequence $(\varepsilon_{k})_{k\in\mathbb{N}}\subset(0,1)$ such that $\varepsilon_{k}\searrow 0$ as $k\to\infty$, and such that as $\varepsilon=\varepsilon_{k}\searrow 0$,
\begin{eqnarray}
u_{\varepsilon}\to u              &\quad&a.e.\ in\ \Omega\times (0,\infty),\label{e228}\\
(u_\varepsilon+\varepsilon)^{m-1}\to u^{m-1}              &\quad&in\ L_{\rm{loc}}^2([0,\infty);L^{2}(\Omega)),\label{e229}\\
(u_\varepsilon+\varepsilon)^{m-1}\rightharpoonup u^{m-1}              &\quad&in\ L_{\rm{loc}}^2([0,\infty);W^{1,2}(\Omega)),\label{e230}\\
w_{\varepsilon}\to w              &\quad&in\ L_{\rm{loc}}^1(\bar{\Omega}\times[0,\infty))\ and\ \ a.e.\ in\ \Omega\times(0,\infty),\label{e231}\\
w_{\varepsilon}\rightharpoonup w              &\quad&in\ L_{\rm{loc}}^2([0,\infty);W^{1,2}(\Omega))\ and\label{e232}\\
w_{\varepsilon}(\cdot,t)\to w(\cdot,t)              &\quad&in\ L^1(\Omega)\ for\ a.e.\ t>0\label{e233}
\end{eqnarray}
as well as:
\begin{eqnarray}
v_{\varepsilon}\to v              &\quad&in\ L_{\rm{loc}}^1(\bar{\Omega}\times[0,\infty))\ and\ a.e.\ in\ \Omega\times(0.\infty),\label{e234}\\
v_{\varepsilon}\stackrel{\ast}{\rightharpoonup} v              &\quad&in\ L^\infty(\Omega\times(0,\infty)),\label{e235}\\
v_{\varepsilon}\rightharpoonup v              &\quad&in\ L_{\rm{loc}}^2([0,\infty);W^{1,2}(\Omega)),\label{e236}\\
v_{\varepsilon}(\cdot,t)\to v(\cdot,t)              &\quad&in\ L^1(\Omega)\ for\ a.e.\ t>0\ and\label{e237}\\
v_{\varepsilon t}\rightharpoonup v_t              &\quad&in\ L_{\rm{loc}}^2([0,\infty);(W^{1,p}(\Omega))^{\star})\ for\ all\ p>2\label{e238}
\end{eqnarray}
with $v:=\|v_0\|_{L^{\infty}(\Omega)}\cdot e^{-w}$. Moreover, the pair $(u,v)$ has the properties $(\ref{e21})$-$(\ref{e23})$ in Definition 2.1
\end{lem}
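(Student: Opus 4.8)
My plan is to produce the subsequence $(\varepsilon_k)$ and the limit functions $u$, $w$ (and hence $v$) by feeding the a priori bounds of Lemmas \ref{lem2.2}--\ref{lem2.5} into an Aubin--Lions--Simon compactness lemma, and then to identify the limits and check the structural conditions \eqref{e21}--\eqref{e23}. Every extraction below is tacitly followed by the usual diagonalization over $T\in\mathbb N$, so that all convergences hold locally on $[0,\infty)$. \emph{Compactness of $w_\varepsilon$.} Since $w_\varepsilon\ge0$, the $L^1$ bound \eqref{e218} together with the bound \eqref{e219} on $\nabla w_\varepsilon$, combined through the Poincar\'e--Wirtinger inequality $\|w\|_{L^2(\Omega)}\le C(\|\nabla w\|_{L^2(\Omega)}+\|w\|_{L^1(\Omega)})$, shows that $(w_\varepsilon)_{\varepsilon\in(0,1)}$ is bounded in $L^2_{\rm loc}([0,\infty);W^{1,2}(\Omega))$. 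Using in addition the bound \eqref{e225} on $w_{\varepsilon t}$ in $L^1_{\rm loc}([0,\infty);(W^{2,2}(\Omega))^\star)$ and the chain $W^{1,2}(\Omega)\hookrightarrow\hookrightarrow L^2(\Omega)\hookrightarrow(W^{2,2}(\Omega))^\star$ (the first embedding being compact), an Aubin--Lions--Simon lemma yields a null sequence $(\varepsilon_k)$ and some $w\ge0$ with $w_{\varepsilon_k}\to w$ strongly in $L^2_{\rm loc}(\bar\Omega\times[0,\infty))$, hence in $L^1_{\rm loc}(\bar\Omega\times[0,\infty))$ and, after a further subsequence, a.e.; this gives \eqref{e231}, and the $L^2_{\rm loc}([0,\infty);W^{1,2}(\Omega))$ bound gives the weak convergence \eqref{e232}. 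Finally, since $\int_0^T\|w_{\varepsilon_k}(\cdot,t)-w(\cdot,t)\|_{L^1(\Omega)}\,dt\to0$ for every $T>0$, a further subsequence satisfies $w_{\varepsilon_k}(\cdot,t)\to w(\cdot,t)$ in $L^1(\Omega)$ for a.e.\ $t$, which is \eqref{e233}.

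\emph{Compactness of $(u_\varepsilon+\varepsilon)^{m-1}$ and a.e.\ convergence of $u_\varepsilon$.} I would apply the identical scheme to $z_\varepsilon:=(u_\varepsilon+\varepsilon)^{m-1}$: Lemma \ref{lem2.3} bounds $\nabla z_\varepsilon$ in $L^2_{\rm loc}(\bar\Omega\times[0,\infty))$, while the time-integrated Gagliardo--Nirenberg estimate of Lemma \ref{lem2.4} bounds $z_\varepsilon$ itself in $L^2_{\rm loc}([0,\infty);L^2(\Omega))$, so that $(z_\varepsilon)$ is bounded in $L^2_{\rm loc}([0,\infty);W^{1,2}(\Omega))$; since Lemma \ref{lem2.5} bounds $\partial_t z_\varepsilon$ in $L^1_{\rm loc}([0,\infty);(W^{1,n+1}(\Omega))^\star)$ via \eqref{e224} and $W^{1,2}(\Omega)\hookrightarrow\hookrightarrow L^2(\Omega)\hookrightarrow(W^{1,n+1}(\Omega))^\star$, the same compactness lemma provides, along a further subsequence, a limit $z$ with $z_{\varepsilon_k}\to z$ in $L^2_{\rm loc}([0,\infty);L^2(\Omega))$ and a.e. Because $s\mapsto s^{1/(m-1)}$ is continuous on $[0,\infty)$ for $m>1$ and $z\ge0$ a.e., the a.e.\ convergence of $z_{\varepsilon_k}$ forces $u_{\varepsilon_k}=z_{\varepsilon_k}^{1/(m-1)}-\varepsilon_k\to u:=z^{1/(m-1)}$ a.e., with $u\ge0$; in particular $z=u^{m-1}$, which is \eqref{e228} and \eqref{e229}, and \eqref{e230} follows from the $L^2_{\rm loc}([0,\infty);W^{1,2}(\Omega))$ bound together with the identification of the weak limit of $\nabla z_{\varepsilon_k}$ with $\nabla u^{m-1}$. (When $m=2$ one simply has $z_\varepsilon=u_\varepsilon+\varepsilon$ and the root step is vacuous.)

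\emph{Convergence of $v_\varepsilon$ and verification of \eqref{e21}--\eqref{e23}.} From \eqref{e215} we have $v_\varepsilon=\|v_0\|_{L^\infty(\Omega)}e^{-w_\varepsilon}$, so the a.e.\ convergence of $w_{\varepsilon_k}$, the uniform bound \eqref{e212} and dominated convergence give $v_{\varepsilon_k}\to v:=\|v_0\|_{L^\infty(\Omega)}e^{-w}$ in $L^1_{\rm loc}(\bar\Omega\times[0,\infty))$ and a.e., i.e.\ \eqref{e234}; \eqref{e212} together with the pointwise convergence yields \eqref{e235}; \eqref{e219} together with $|\nabla v_\varepsilon|=v_\varepsilon|\nabla w_\varepsilon|\le\|v_0\|_{L^\infty(\Omega)}|\nabla w_\varepsilon|$ yields \eqref{e236}; \eqref{e237} follows from dominated convergence and the a.e.-in-$\Omega$ convergence of $w_{\varepsilon_k}(\cdot,t)$ for a.e.\ $t$; and \eqref{e238} is immediate from \eqref{e226}, each limit being identified with $v$ (resp.\ $v_t$) by uniqueness of distributional limits. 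It remains to check the inclusions: $u\in L^1_{\rm loc}(\bar\Omega\times[0,\infty))$ follows from Fatou's lemma and the mass identity \eqref{e211}; $v\in L^\infty_{\rm loc}(\bar\Omega\times[0,\infty))$ and $v>0$ a.e.\ from $0<v\le\|v_0\|_{L^\infty(\Omega)}$ and the a.e.\ finiteness of $w$; $v\in L^2_{\rm loc}([0,\infty);W^{1,2}(\Omega))$ and $\nabla u^{m-1}\in L^2_{\rm loc}(\bar\Omega\times[0,\infty))$ from \eqref{e236}, \eqref{e230} and weak lower semicontinuity of the norm; $u\ge0$ a.e.\ from the construction; and $\nabla\ln v=-\nabla w\in L^2_{\rm loc}(\bar\Omega\times[0,\infty))$ from \eqref{e232}.

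The genuinely delicate point is the compactness step for $z_\varepsilon$: to run Aubin--Lions one needs $z_\varepsilon$ bounded in $L^2_{\rm loc}([0,\infty);W^{1,2}(\Omega))$ rather than merely a bound on $\nabla z_\varepsilon$, as well as the time-derivative bound \eqref{e224}; and since the mass bound on $u_\varepsilon$ does not control $\int_\Omega(u_\varepsilon+\varepsilon)^{m-1}$ when $m>2$, both of these rest on the Gagliardo--Nirenberg estimates of Lemma \ref{lem2.4}, in particular at the critical exponent $p=\tfrac{2n}{n-2}$ entering \eqref{e224}, which is exactly where the hypothesis $m>1+\tfrac{n-2}{2n}$ is used.
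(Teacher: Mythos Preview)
Your proof is correct and follows essentially the same route as the paper: two applications of the Aubin--Lions lemma (to $w_\varepsilon$ and to $z_\varepsilon=(u_\varepsilon+\varepsilon)^{m-1}$) fed by the a priori bounds of Lemmas \ref{lem2.2}--\ref{lem2.5}, followed by the transfer to $v_\varepsilon$ via $v_\varepsilon=\|v_0\|_{L^\infty(\Omega)}e^{-w_\varepsilon}$ and dominated convergence, and finally a Fatou argument for $u\in L^1_{\mathrm{loc}}$. In fact you supply considerably more detail than the paper, which merely cites the relevant lemmas and the Aubin--Lions lemma.

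One small inaccuracy in your closing commentary (which does not affect the proof itself): the hypothesis $m>1+\tfrac{n-2}{2n}$ is \emph{not} needed for Lemma \ref{lem2.6}. At the critical exponent $p=\tfrac{2n}{n-2}$ the Gagliardo--Nirenberg parameter $\theta$ in the proof of Lemma \ref{lem2.4} equals $1$, so $\int_0^T\|z_\varepsilon\|_{L^p}^2$ is bounded for all $m>1$, and both the $L^2_{\mathrm{loc}}([0,\infty);W^{1,2}(\Omega))$ bound on $z_\varepsilon$ and the estimate \eqref{e224} hold without that restriction. The threshold $m>1+\tfrac{n-2}{2n}$ actually enters later, in Lemma \ref{lem2.7}, where one needs some $p\le\tfrac{2n}{n-2}$ with $p(m-1)>1$ in order to run the Vitali uniform-integrability argument for the strong $L^1$ convergence of $u_\varepsilon$.
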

\begin{proof}
In view of Lemmas 2.2, 2.3 and 2.5, the properties (\ref{e228})-(\ref{e233}) can be achieved on choosing an appropriate sequence by means of two straightforward applications of Aubin-Lions lemma\cite{Bratanow1979Navier}. By non-negativity of $w_\varepsilon$, (\ref{e231})-(\ref{e233}) thereafter imply (\ref{e234}), (\ref{e236}) and (\ref{e237}), whereas (\ref{e212}) guarantees that on extraction of a suitable subsequence, also (\ref{e235}) is valid. Finally also (\ref{e238}) can be achieved is a direct consequence of the estimate (\ref{e226}) in Lemma \ref{lem2.4}.

Now the properties (\ref{e22}) and (\ref{e23}) immediately follow from (\ref{e228}), (\ref{e234}) and the finiteness of $w$ a.e. in $\Omega\times(0,\infty)$ as well as (\ref{e230}) and (\ref{e232}), while the second inclusion in (\ref{e21}) is obvious from (\ref{e235}) and first follow Fatou's lemma, which in conjunction with (\ref{e211}) implies that
\begin{equation*}
\int_0^T\int_{\Omega}u\leq\liminf\limits_{\varepsilon=\varepsilon_k\searrow0}u_\varepsilon\leq\lambda T
\end{equation*}
for all $T>0$.
\end{proof}
\section{Strong convergence of $u_{\varepsilon k}$}
\begin{lem}\label{lem2.7}
Let $u$ and $(\varepsilon_{k})_{k\in\mathbb{N}}\subset(0,1)$ be as obtained in Lemma $\ref{lem2.6}$. Then
\begin{equation}\label{e239}
u_{\varepsilon}\to u\ \ in\ L_{\rm{loc}}^{1}(\bar{\Omega}\times[0,\infty))\ \ as\ \varepsilon=\varepsilon_{k}\searrow 0.
\end{equation}
In particular,
\begin{equation}\label{e240}
\int_{\Omega}u(\cdot,t)=\int_{\Omega}u_{0}\ \ \ for\ a.e.\ t>0.
\end{equation}
\end{lem}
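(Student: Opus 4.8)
The plan is to upgrade the a.e.\ convergence $u_\varepsilon\to u$ from \eqref{e228} to strong $L^1_{\rm loc}$ convergence by means of an equi-integrability argument combined with the Vitali convergence theorem. First I would fix an arbitrary $T>0$ and work on the cylinder $Q_T:=\Omega\times(0,T)$. The key observation is that strong convergence in $L^1(Q_T)$ follows from \eqref{e228} together with uniform integrability of the family $(u_\varepsilon)_{\varepsilon\in(0,1)}$ over $Q_T$; since $|\Omega\times(0,T)|<\infty$ and the $u_\varepsilon$ are bounded in $L^1(Q_T)$ by the mass identity \eqref{e211}, Vitali's theorem reduces the claim to showing that for every $\eta>0$ there is $\delta>0$ such that $\int_E u_\varepsilon<\eta$ whenever $E\subset Q_T$ is measurable with $|E|<\delta$, uniformly in $\varepsilon$.

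To obtain this equi-integrability, I would exploit the higher-integrability encoded in Lemma \ref{lem2.4}. Choosing there an exponent $p>1$ with $p\in(1,\tfrac{2n}{n-2}]$ (any fixed admissible $p$ works, and for $n=2$ one may take e.g.\ $p=2$), Lemma \ref{lem2.4} provides $C(T)>0$ with $\int_0^T\|(u_\varepsilon+\varepsilon)^{m-1}\|_{L^p(\Omega)}\,dt\le C(T)$ for all $\varepsilon\in(0,1)$. Since $(u_\varepsilon+\varepsilon)^{m-1}\ge u_\varepsilon^{\,m-1}$ when $m\ge 2$, and more generally $(u_\varepsilon+\varepsilon)^{m-1}\ge c\,u_\varepsilon^{\,m-1}$ on $\{u_\varepsilon\ge 1\}$ together with $u_\varepsilon\le (u_\varepsilon+\varepsilon)$ always, one reads off a bound of the form $\int_0^T\|u_\varepsilon(\cdot,t)\|_{L^{p(m-1)}(\Omega)}^{\,m-1}\,dt\le C'(T)$; that is, $u_\varepsilon$ is bounded in $L^{m-1}\!\big((0,T);L^{p(m-1)}(\Omega)\big)$, or after applying H\"older in time, in $L^{s}(Q_T)$ for some $s>1$ (here $m>1$ is exactly what guarantees $p(m-1)>1$ is not needed — what matters is the mixed-norm bound with a positive power, which yields an $L^s(Q_T)$ bound with $s=\min\{m-1,\,p(m-1)\}$ when $m>1$; if $m-1<1$ one instead combines the mixed norm with the $L^\infty_tL^1_x$ mass bound via interpolation to reach some $s>1$). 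Granting a uniform bound $\|u_\varepsilon\|_{L^s(Q_T)}\le M$ with $s>1$, H\"older's inequality gives $\int_E u_\varepsilon\le M\,|E|^{1-1/s}$, which is the desired equi-integrability with $\delta=(\eta/M)^{s/(s-1)}$.

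With \eqref{e239} established on every $Q_T$, the second assertion \eqref{e240} is immediate: $L^1_{\rm loc}$ convergence implies $\int_\Omega u_\varepsilon(\cdot,t)\,dx\to\int_\Omega u(\cdot,t)\,dx$ for a.e.\ $t>0$ along the sequence $\varepsilon=\varepsilon_k$ (after passing to a further subsequence so that $u_{\varepsilon_k}(\cdot,t)\to u(\cdot,t)$ in $L^1(\Omega)$ for a.e.\ $t$, which is automatic from $L^1(Q_T)$ convergence), and since $\int_\Omega u_\varepsilon(\cdot,t)=\int_\Omega u_0$ for all $t$ by \eqref{e211}, the limit identity \eqref{e240} follows. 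The main obstacle is the bookkeeping in the case $1<m<2$, where $(u_\varepsilon+\varepsilon)^{m-1}$ is a \emph{concave} power and the inequality $(u_\varepsilon+\varepsilon)^{m-1}\ge u_\varepsilon^{m-1}$ still holds but $p(m-1)$ may be close to $1$, so that the mixed-norm bound only barely beats $L^1$; resolving this cleanly requires combining the Lemma \ref{lem2.4} bound with the uniform $L^\infty((0,T);L^1(\Omega))$ mass control through an interpolation inequality to manufacture a genuine $L^s(Q_T)$ bound with $s>1$, which is exactly the place where the hypothesis $m>1$ is used.
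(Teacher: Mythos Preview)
Your overall strategy---Vitali's theorem via uniform equi-integrability, using the a.e.\ convergence \eqref{e228} together with the higher integrability from Lemma~\ref{lem2.4}---is exactly what the paper does. The tactical routes to equi-integrability differ, however. The paper does not try to manufacture a single $L^s(Q_T)$ bound with $s>1$; instead it fixes $p>\tfrac{1}{m-1}$ in $[1,\tfrac{2n}{n-2}]$, picks a threshold $\Sigma$, and splits $(0,T)$ into the ``good'' set $S_1(\varepsilon)=\{t:\|(u_\varepsilon+\varepsilon)^{m-1}\|_{L^p}\le\Sigma\}$ and its complement $S_2(\varepsilon)$. On $S_1$ one applies H\"older in $x$ with exponent $p(m-1)$; on $S_2$ one uses $|S_2(\varepsilon)|\le C(T)/\Sigma$ together with the mass identity \eqref{e211}. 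Your interpolation route is conceptually cleaner; the paper's level-set splitting avoids writing down the interpolated exponent explicitly. Both lead to the same conclusion and require the same ingredient: $p(m-1)>1$ for some admissible $p$.

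There is a gap in your bookkeeping. The claim that ``$m>1$ is exactly what is needed'' is wrong, and the formula $s=\min\{m-1,\,p(m-1)\}$ is meaningless (for $1<m<2$ it gives $s<1$). Carrying out your interpolation honestly: from $u_\varepsilon\in L^\infty_t L^1_x$ and $\int_0^T\|u_\varepsilon\|_{L^{p(m-1)}}^{m-1}\,dt\le C$, H\"older in $x$ yields $\|u_\varepsilon(\cdot,t)\|_{L^s}^s\le\lambda^{s(1-\alpha)}\|u_\varepsilon(\cdot,t)\|_{L^{p(m-1)}}^{s\alpha}$ with $s\alpha=\tfrac{(s-1)p(m-1)}{p(m-1)-1}$, and integrability in $t$ forces $s\alpha\le m-1$, i.e.\ $s\le m-\tfrac{1}{p}$. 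Hence $s>1$ is achievable precisely when $m>1+\tfrac{1}{p}$, which under $p\le\tfrac{2n}{n-2}$ is the theorem's hypothesis $m>1+\tfrac{n-2}{2n}$. The paper's argument needs the same condition (so that the H\"older conjugate $q$ satisfying $\tfrac{1}{q}+\tfrac{1}{p(m-1)}=1$ is finite). So your plan is correct, but only under the full hypothesis on $m$, not merely $m>1$.
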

\begin{proof}
This can be derived from Lemma $\ref{lem2.6}$ by means of the Vitali convergence theorem.
  We fix $T>0$, $1\leq p\leq\frac{2n}{n-2}$ from Lemma $\ref{lem2.4}$ and $\frac{1}{q}+\frac{1}{p(m-1)}=1$. Given $\eta>0$, we can pick $\Sigma>1$ large enough and therefore $\delta>0$ suitably small such that
\begin{equation}\label{e241}
\Sigma^{\frac{1}{m-1}} T^{\frac{1}{p(m-1)}}\delta^{\frac{1}{q}}<\frac{\eta}{2}\ \ and\ \ \frac{\lambda c_1}{|\Sigma|}<\frac{\eta}{2}.
\end{equation}
Then introducing the sets:
\begin{equation*}
\begin{split}
&S_{1}(\varepsilon):=\{t\in(0,T)\big |\|(u_{\varepsilon}(x,t)+\varepsilon)^{m-1}\|_{L^p(\Omega)}\leq\Sigma\}\ and\\
&S_{2}(\varepsilon):=\{t\in(0,T)\big |\|(u_{\varepsilon}(x,t)+\varepsilon)^{m-1}\|_{L^p(\Omega)}>\Sigma\},
\end{split}
\end{equation*}
for $\varepsilon\in (0,1)$, we first use $(\ref{e223})$ to estimate
\begin{equation*}
C\geq\int_{S_2(\varepsilon)}\|(u_{\varepsilon}(x,t)+\varepsilon)^{m-1}\|_{L^p}>\int_{S_2(\varepsilon)}\Sigma=|S_2(\varepsilon)||\Sigma|
\end{equation*}
and hence
\begin{equation*}
|S_{2}(\varepsilon)|\leq\frac{C}{|\Sigma|}
\end{equation*}
for any such $\varepsilon$.
Therefore, given an arbitrary measurable $E\subset\Omega\times(0,T)$ such that $|E|<\delta$,
writing $E(t):=\{x\in\Omega|(x,t)\in E\}$ and taking $p>\frac{1}{m-1}$ we may recall $(\ref{e211})$ and twice apply the Cauchy-Schwarz inequality to see that for each $\varepsilon\in(0,1)$ we have
\begin{equation*}
\begin{split}
\int\int_{E}u_{\varepsilon}&\leq\int_{S_1(\varepsilon)}\int_{E(t)}u_{\varepsilon}(x,t)dxdt+\int_{S_2(\varepsilon)}\int_{E(t)}u_{\varepsilon}(x,t)\\
&\leq\int_{S_1(\varepsilon)}|E(t)|^{\frac{1}{q}}\Big\{\int_{\Omega}(u_{\varepsilon}(x,t)+\varepsilon)^{p(m-1)} dx\Big\}^{\frac{1}{p(m-1)}}dt+\lambda|S_2(\varepsilon)|\\
&\leq\Sigma^{\frac{1}{m-1}}\cdot|S_1(\varepsilon)|^{\frac{1}{p(m-1)}}\Big\{\int_{S_1(\varepsilon)}|E(t)|dt\Big\}^{\frac{1}{q}}+\lambda|S_2(\varepsilon)|\\
&\leq \Sigma^{\frac{1}{m-1}} T^{\frac{1}{p(m-1)}}|E|^{\frac{1}{q}}+\lambda\cdot\frac{c_1}{|\Sigma|}\\
&\leq\frac{\eta}{2}+\frac{\eta}{2}=\eta,
\end{split}
\end{equation*}
according to (\ref{e241}) and our assumption on size of E. Since we already know from Lemma \ref{lem2.6} that $u_\varepsilon\to u$ a.e in $\Omega\times(0,T)$ as $\varepsilon=\varepsilon_{k}\searrow 0$, along with the Vitail theorem this shows that in fact $u_\varepsilon\to u$ in $L^1(\Omega\times(0,T))$ and thereby establishes (\ref{e239}). The mass conservation property (\ref{e240}) is thereafter an obvious consequence of (\ref{e211}) and (\ref{e239}).
\end{proof}
\begin{lem}\label{lem2.8}
The functions $u$ and $v$ obtained in Lemma $\ref{e25}$ satisfy the identity ($\ref{e26}$) in Definition \ref{defi21} for all test functions from the class indicated there.
\end{lem}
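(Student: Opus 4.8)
The plan is to pass to the limit $\varepsilon=\varepsilon_k\searrow0$ in a weak form of the second equation of the regularized system $(\ref{e210})$. Fix a test function $\varphi$ from the class specified in Definition $\ref{defi21}$ and choose $T>0$ with $\mathrm{supp}\,\varphi\subset\bar\Omega\times[0,T]$. Since $v_\varepsilon$ is a classical solution of $v_{\varepsilon t}=\Delta v_\varepsilon-f_\varepsilon(u_\varepsilon)v_\varepsilon$ which is continuous on $\bar\Omega\times[0,\infty)$ and of class $C^{2,1}$ on $(0,\infty)$, multiplying this equation by $\varphi$, integrating over $\Omega\times(\tau,\infty)$ for $\tau>0$, integrating by parts in time on the left and in space (using the homogeneous Neumann boundary condition) on the right, and then letting $\tau\searrow0$ by means of the continuity of $v_\varepsilon$ at $t=0$ and $\varphi_t\in L^2(\Omega\times(0,\infty))$, one arrives at
\[
\int_0^\infty\int_\Omega v_\varepsilon\varphi_t+\int_\Omega v_0\varphi(\cdot,0)=\int_0^\infty\int_\Omega\nabla v_\varepsilon\cdot\nabla\varphi+\int_0^\infty\int_\Omega f_\varepsilon(u_\varepsilon)v_\varepsilon\varphi
\]
for every $\varepsilon\in(0,1)$; this uses only $\varphi\in L^2((0,\infty);W^{1,2}(\Omega))$ with $\varphi_t\in L^2$ and compact support, so no additional smoothing of $\varphi$ is required.

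It then remains to take $\varepsilon=\varepsilon_k\searrow0$ in each of the four integrals. In the first term I would combine the a.e.\ convergence $v_\varepsilon\to v$ of $(\ref{e234})$ with the uniform bound $0<v_\varepsilon\le\|v_0\|_{L^\infty(\Omega)}$ of $(\ref{e212})$ and the integrability $\varphi_t\in L^1(\Omega\times(0,T))$, so that dominated convergence gives $\int_0^\infty\int_\Omega v_\varepsilon\varphi_t\to\int_0^\infty\int_\Omega v\varphi_t$. In the diffusion term, the weak convergence $\nabla v_\varepsilon\rightharpoonup\nabla v$ in $L^2_{\mathrm{loc}}([0,\infty);L^2(\Omega))$ from $(\ref{e236})$, tested against $\nabla\varphi\in L^2(\Omega\times(0,\infty))$ (which has compact support in time), yields $\int_0^\infty\int_\Omega\nabla v_\varepsilon\cdot\nabla\varphi\to\int_0^\infty\int_\Omega\nabla v\cdot\nabla\varphi$, while the term $\int_\Omega v_0\varphi(\cdot,0)$ is independent of $\varepsilon$.

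The one genuinely nonlinear contribution is $\int_0^\infty\int_\Omega f_\varepsilon(u_\varepsilon)v_\varepsilon\varphi$, and this is where the strong convergence of Lemma $\ref{lem2.7}$ enters. Since $f_\varepsilon(s)=s$ on $[0,1/\varepsilon]$ and $u$ is finite a.e., $(\ref{e228})$ forces $f_\varepsilon(u_\varepsilon)\to u$ a.e.\ in $\Omega\times(0,\infty)$, which together with $(\ref{e234})$ gives $f_\varepsilon(u_\varepsilon)v_\varepsilon\varphi\to uv\varphi$ a.e. To pass to the limit under the integral I would invoke the Vitali convergence theorem on $\Omega\times(0,T)$: from $0\le f_\varepsilon(u_\varepsilon)\le u_\varepsilon$ and $0<v_\varepsilon\le\|v_0\|_{L^\infty(\Omega)}$ one gets the pointwise bound $|f_\varepsilon(u_\varepsilon)v_\varepsilon\varphi|\le\|v_0\|_{L^\infty(\Omega)}\|\varphi\|_{L^\infty}\,u_\varepsilon$ on $\mathrm{supp}\,\varphi$, and the family $(u_\varepsilon)_\varepsilon$ is uniformly integrable on $\Omega\times(0,T)$ because it converges there in $L^1$ by $(\ref{e239})$; hence $(f_\varepsilon(u_\varepsilon)v_\varepsilon\varphi)_\varepsilon$ is uniformly integrable and a.e.\ convergent, so Vitali's theorem yields $\int_0^\infty\int_\Omega f_\varepsilon(u_\varepsilon)v_\varepsilon\varphi\to\int_0^\infty\int_\Omega uv\varphi$. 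Collecting the four limits produces exactly $(\ref{e26})$. The main obstacle, and the only place where more than the linear convergences recorded in Lemma $\ref{lem2.6}$ is needed, is this control of the consumption term $f_\varepsilon(u_\varepsilon)v_\varepsilon$; all remaining passages are routine weak/strong or dominated-convergence arguments.
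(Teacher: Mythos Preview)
Your argument is correct and follows essentially the same route as the paper: derive the weak formulation of the second equation in $(\ref{e210})$, pass to the limit in the linear terms via the convergences of Lemma~\ref{lem2.6}, and use Lemma~\ref{lem2.7} to handle the nonlinear absorption term. The only cosmetic difference is that the paper first proves $f_\varepsilon(u_\varepsilon)\to u$ in $L^1$ (via the splitting $|f_\varepsilon(u_\varepsilon)-u|\le|u_\varepsilon-u|+|f_\varepsilon(u)-u|$) and then pairs this strong convergence with the weak-$*$ convergence $(\ref{e235})$ of $v_\varepsilon$, whereas you dominate the product by a constant multiple of $u_\varepsilon$ and invoke Vitali directly; both mechanisms rest on the same $L^1$ compactness $(\ref{e239})$.
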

\begin{proof}
If $(\varepsilon_{k})_{k\in\mathbb{N}}\subset(0,1)$, definition of $f_{\varepsilon}$ warrants that for each $T>0$ we have
\begin{equation*}
\begin{split}
\int_0^{\infty}\int_{\Omega}|f_{\varepsilon}(u_{\varepsilon})-u|&\leq\int_0^{\infty}\int_{\Omega}|f_{\varepsilon}(u_{\varepsilon})-f_{\varepsilon}(u)|+\int_0^{\infty}\int_{\Omega}|f_{\varepsilon}(u)-u|\\
&\leq\int_0^{\infty}\int_{\Omega}|u_{\varepsilon}-u|+\int_0^{\infty}\int_{\{u(\cdot,t)\geq\frac{1}{\varepsilon}\}}|f_{\varepsilon}(u)-u|\\
&\leq\int_0^{\infty}\int_{\Omega}|u_{\varepsilon}-u|+2\int_0^{\infty}\int_{\{u(\cdot,t)\geq\frac{1}{\varepsilon}\}}u\\
&\to 0\ \ \text{as}\ \varepsilon=\varepsilon_{k}\searrow 0,
\end{split}
\end{equation*}
due to Lemma \ref{lem2.7} and dominated convergence theorem. Then for each $\varphi$ from the class in question, in light of lemma \ref{lem2.7} combined with (\ref{e235}) we know that
\begin{equation}\label{e242}
\int_0^{\infty}\int_{\Omega}f_{\varepsilon}(u_{\varepsilon})v_{\varepsilon}\varphi\to\int_0^{\infty}\int_{\Omega}uv\varphi\ \ \ \text{as}\ \varepsilon=\varepsilon_{k}\searrow 0.
\end{equation}
Thanks to (\ref{e242}) as well as (\ref{e234}) and (\ref{e236}), it readily follows on taking $\varepsilon=\varepsilon_{k}\searrow 0$ in each of the integrals making up the corresponding weak formulation of the respective initial-boundary value sub-problem of (\ref{e210}) satisfied by $v_{\varepsilon}$ that indeed (\ref{e26}) is satisfied.
\end{proof}
\section{Proof of Theorem 1.1}
\begin{lem}\label{lem2.9}
Let $w$ and  $(\varepsilon_{k})_{k\in\mathbb{N}}$ be as given by Lemma 2.6. Then for each $T>0$ we have
\begin{equation}\label{e243}
\nabla w_{\varepsilon}\to\nabla w\ \ in\ L^{2}(\Omega\times(0,T))\ \ as\ \varepsilon=\varepsilon_{k}\searrow0.
\end{equation}
\end{lem}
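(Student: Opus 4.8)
The plan is to upgrade the weak convergence $\nabla w_\varepsilon \rightharpoonup \nabla w$ in $L^2_{\mathrm{loc}}([0,\infty);L^2(\Omega))$ from Lemma~\ref{lem2.6} to strong convergence on each $\Omega\times(0,T)$, and the standard device for this is to show that the $L^2$ norms converge: since $L^2(\Omega\times(0,T))$ is a Hilbert space, weak convergence together with $\|\nabla w_\varepsilon\|_{L^2(\Omega\times(0,T))}\to\|\nabla w\|_{L^2(\Omega\times(0,T))}$ forces norm convergence and hence strong convergence. So the heart of the matter is a limsup-type estimate $\limsup_{\varepsilon=\varepsilon_k\searrow0}\int_0^T\!\int_\Omega |\nabla w_\varepsilon|^2 \le \int_0^T\!\int_\Omega |\nabla w|^2$, the reverse inequality being automatic from weak lower semicontinuity of the norm.

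First I would extract from the $w_\varepsilon$-equation in (\ref{e216}) an (approximate) energy-type identity. Multiplying $w_{\varepsilon t}=\Delta w_\varepsilon-|\nabla w_\varepsilon|^2+f_\varepsilon(u_\varepsilon)$ by $1$ and integrating already gave Lemma~\ref{lem2.2}; but to see $|\nabla w_\varepsilon|^2$ with the right sign one integrates the PDE directly over $\Omega\times(0,T)$:
\begin{equation*}
\int_0^T\!\int_\Omega |\nabla w_\varepsilon|^2 = \int_\Omega w_0 - \int_\Omega w_\varepsilon(\cdot,T) + \int_0^T\!\int_\Omega f_\varepsilon(u_\varepsilon),
\end{equation*}
using the Neumann condition to kill $\int_\Omega\Delta w_\varepsilon$. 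Now I pass to the limit along $\varepsilon=\varepsilon_k\searrow0$ term by term: $\int_\Omega w_0$ is fixed; $\int_0^T\!\int_\Omega f_\varepsilon(u_\varepsilon)\to\int_0^T\!\int_\Omega u$ by the estimate already proved at the start of the proof of Lemma~\ref{lem2.8} (together with Lemma~\ref{lem2.7}); and for $\int_\Omega w_\varepsilon(\cdot,T)$ one uses (\ref{e233}), i.e.\ $w_\varepsilon(\cdot,t)\to w(\cdot,t)$ in $L^1(\Omega)$ for a.e.\ $t>0$, so that for a.e.\ $T>0$ we get $\int_\Omega w_\varepsilon(\cdot,T)\to\int_\Omega w(\cdot,T)$. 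This yields $\int_0^T\!\int_\Omega|\nabla w_\varepsilon|^2 \to \int_\Omega w_0 - \int_\Omega w(\cdot,T) + \int_0^T\!\int_\Omega u$ for a.e.\ $T>0$.

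To close the argument I must identify the right-hand side limit with $\int_0^T\!\int_\Omega|\nabla w|^2$, and this is where the main obstacle lies: I need to know that the limit pair $(u,w)$ satisfies the corresponding identity $\int_\Omega w(\cdot,T)-\int_\Omega w_0 = -\int_0^T\!\int_\Omega|\nabla w|^2 + \int_0^T\!\int_\Omega u$, i.e.\ that the $w$-equation holds in integrated form with \emph{no defect} in the $|\nabla w|^2$ term. Equivalently, via $v=\|v_0\|_{L^\infty}e^{-w}$, this is precisely the mass-type relation one reads off from the $v$-identity (\ref{e26}) established in Lemma~\ref{lem2.8}, or it can be obtained directly by testing the $w_\varepsilon$-equation against a temporal cutoff $\chi_{(0,T)}$ (regularized) and using lower semicontinuity only where it is harmless. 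The delicate point is that a priori the limit could satisfy only an \emph{inequality} $\int_\Omega w(\cdot,T) - \int_\Omega w_0 \le -\int_0^T\!\int_\Omega|\nabla w|^2 + \int_0^T\!\int_\Omega u$ (weak lower semicontinuity makes $\int\int|\nabla w|^2 \le \liminf \int\int|\nabla w_\varepsilon|^2$, which pushes the wrong way when the $|\nabla w_\varepsilon|^2$ term sits on the right side of the identity); so one must argue that $w$ in fact solves the $w$-equation in the sense of an identity — for instance by noting that $v$ solves (\ref{e26}) as an equality, that $\ln v$ is then shown to be a genuine weak solution of $w_t=\Delta w-|\nabla w|^2+u$ by the chain rule (licit because $\nabla\ln v\in L^2_{\mathrm{loc}}$ by (\ref{e23})), and feeding this equality back in. Once the right-hand side is identified as $\int_0^T\!\int_\Omega|\nabla w|^2$, we have $\|\nabla w_\varepsilon\|_{L^2(\Omega\times(0,T))}\to\|\nabla w\|_{L^2(\Omega\times(0,T))}$; combined with $\nabla w_\varepsilon\rightharpoonup\nabla w$ from (\ref{e232}) and the Radon–Riesz property of $L^2$, this gives (\ref{e243}) for a.e.\ $T>0$, and hence for every $T>0$ by monotonicity of the integrals in $T$.
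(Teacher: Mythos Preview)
The paper does not actually prove this lemma; it simply cites \cite[Corollary~2.1]{Winkler2016The}. Your strategy --- use the integrated $w_\varepsilon$-identity to show convergence of the $L^2$ norms of the gradients, then invoke the Radon--Riesz property of $L^2$ together with the weak convergence from (\ref{e232}) --- is precisely the approach taken in that reference, and you have correctly isolated the only nontrivial step: after passing to the limit in
\[
\int_0^T\!\int_\Omega |\nabla w_\varepsilon|^2 \;=\; \int_\Omega w_0 - \int_\Omega w_\varepsilon(\cdot,T) + \int_0^T\!\int_\Omega f_\varepsilon(u_\varepsilon),
\]
one must identify the resulting right-hand side with $\int_0^T\!\int_\Omega|\nabla w|^2$, which amounts to showing that $w$ satisfies the limit equation as an \emph{equality}, not merely the supersolution inequality that weak lower semicontinuity automatically delivers.

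Your proposed resolution --- deducing the $w$-equation from the $v$-identity (\ref{e26}) ``by the chain rule, licit because $\nabla\ln v\in L^2_{\mathrm{loc}}$'' --- is the right idea but is not fully justified as stated. Passing from the weak form of $v_t=\Delta v-uv$ to that of $w_t=\Delta w-|\nabla w|^2+u$ effectively requires testing (\ref{e26}) with $\varphi/v$, and $1/v$ is not an admissible test function: no $L^\infty$ bound on $1/v$ is available, and $\nabla(1/v)$ need not lie in $L^2$. Knowing $\nabla\ln v\in L^2_{\mathrm{loc}}$ tells you the target identity makes sense, not that the derivation is valid. In the cited paper this gap is closed by inserting truncations --- testing against $\varphi\,\zeta_\delta(v)$ for smooth $\zeta_\delta$ approximating $s\mapsto 1/s$ from below, or equivalently working with $-\ln(v+\delta)$ --- and passing $\delta\to0$ using the available bounds on $w$ in $L^1$ and on $\nabla w$ in $L^2$. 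Once that integrated identity for $w$ is established for a.e.\ $T$, the remainder of your argument (norm convergence plus weak convergence yields strong convergence in $L^2$; extend to every $T>0$ by monotonicity of $T\mapsto\int_0^T\!\int_\Omega|\nabla w_\varepsilon-\nabla w|^2$) is correct.
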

\begin{proof}
We refer the reader to \cite[Corollary 2.1]{Winkler2016The} for the proof.
\end{proof}
\begin{lem}\label{lem2.10}
The couple $(u,v)$ provided by Lemma 2.6 is global generalized solution of $(\ref{e11})$ in sense of Definition 2.1.
\end{lem}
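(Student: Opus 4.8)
The plan is to assemble the final statement from the ingredients already collected. By Lemma \ref{lem2.6}, the limit pair $(u,v)$ satisfies the regularity requirements \eqref{e21}--\eqref{e23}; by Lemma \ref{lem2.7} (specifically \eqref{e240}) the mass conservation property \eqref{e24} holds; and by Lemma \ref{lem2.8} the identity \eqref{e26} for $v$ is satisfied for all admissible test functions. Hence the only piece still to be verified is the inequality \eqref{e25} (respectively the identity \eqref{eremone} in the borderline case $m=2$). So the proof reduces to passing to the limit $\varepsilon=\varepsilon_k\searrow 0$ in an appropriate weak formulation of the first equation in \eqref{e216}.

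Concretely, I would fix a nonnegative $\varphi\in C_0^\infty(\bar\Omega\times[0,\infty))$ and test the first equation in \eqref{e216} against $(u_\varepsilon+\varepsilon)^{m-2}\varphi$, exactly as in the derivation of \eqref{e227} but keeping the time derivative in divergence form; after integrating by parts in $t$ and using $u_\varepsilon(\cdot,0)=u_0$, one obtains an identity of the schematic shape
\begin{equation*}
-\frac{m-1}{m(m-2)}\int_0^\infty\!\!\int_\Omega (u_\varepsilon+\varepsilon)^{m-1}\varphi_t-\frac{m-1}{m(m-2)}\int_\Omega (u_0+\varepsilon)^{m-1}\varphi(\cdot,0)
= I_\varepsilon^{(1)}+I_\varepsilon^{(2)}+I_\varepsilon^{(3)}+I_\varepsilon^{(4)},
\end{equation*}
with $I_\varepsilon^{(1)}=-\int\!\int|\nabla(u_\varepsilon+\varepsilon)^{m-1}|^2\varphi$, $I_\varepsilon^{(2)}=-\frac{m-1}{m-2}\int\!\int(u_\varepsilon+\varepsilon)^{m-1}\nabla(u_\varepsilon+\varepsilon)^{m-1}\cdot\nabla\varphi$, and $I_\varepsilon^{(3)},I_\varepsilon^{(4)}$ the two taxis terms carrying the factor $\frac{u_\varepsilon}{u_\varepsilon+\varepsilon}f'_\varepsilon(u_\varepsilon)\nabla w_\varepsilon$. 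On the left-hand side, \eqref{e229} and \eqref{e12} give convergence to the corresponding expressions in \eqref{e25}. For the right-hand side, $I_\varepsilon^{(2)}$ converges by combining the strong $L^2_{\mathrm{loc}}$ convergence \eqref{e229} with the weak $L^2_{\mathrm{loc}}$ gradient convergence \eqref{e230}; the factors $\frac{u_\varepsilon}{u_\varepsilon+\varepsilon}\to 1$ and $f'_\varepsilon(u_\varepsilon)\to 1$ a.e.\ (bounded by $1$), so using the strong gradient convergence \eqref{e243} of $w_\varepsilon$ together with \eqref{e229}, \eqref{e230} and dominated convergence the taxis integrals $I_\varepsilon^{(3)},I_\varepsilon^{(4)}$ converge to the last two integrals in \eqref{e25}. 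The term $I_\varepsilon^{(1)}$ is the one where equality degenerates into an inequality: by weak lower semicontinuity of the $L^2$ norm (applied to $\sqrt\varphi\,\nabla(u_\varepsilon+\varepsilon)^{m-1}$, legitimate since $\varphi\ge 0$), one has $\limsup_{\varepsilon}I_\varepsilon^{(1)}\le-\int\!\int|\nabla u^{m-1}|^2\varphi$, which is precisely the direction needed to produce \eqref{e25} from the $\varepsilon$-identity.

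The main obstacle I anticipate is the correct and careful passage to the limit in the two chemotactic terms, where three distinct convergences must be coupled simultaneously: the strong $L^2$-convergence of $\nabla w_\varepsilon$ from Lemma \ref{lem2.9}, the strong $L^2_{\mathrm{loc}}$-convergence of $(u_\varepsilon+\varepsilon)^{m-1}$ from \eqref{e229} (for $I_\varepsilon^{(4)}$), and the weak $L^2_{\mathrm{loc}}$-convergence of $\nabla(u_\varepsilon+\varepsilon)^{m-1}$ from \eqref{e230} (for $I_\varepsilon^{(3)}$); one has to ensure that in each product exactly one factor converges strongly so that weak$\times$strong convergence of products applies, while the bounded cutoff factors $\frac{u_\varepsilon}{u_\varepsilon+\varepsilon}f'_\varepsilon(u_\varepsilon)$ are absorbed via a.e.\ convergence and dominated convergence. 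I would also note that, since the constant $C$ in Lemma \ref{lem2.3} is $\varepsilon$-independent, the bound $\int_0^T\!\int_\Omega|\nabla(u_\varepsilon+\varepsilon)^{m-1}|^2\le C(T)$ underpins all these limit arguments, and that it is the assumption $m>1+\frac{n-2}{2n}$ which, through Lemma \ref{lem2.4} with $p=\frac{2n}{n-2}$, guarantees $u^{m-1}$ and hence $u$ lie in the spaces needed for the products above to make sense. Finally, for $m=2$ the same testing procedure with $\ln(u_\varepsilon+\varepsilon)$ (as in Case 1 of Lemma \ref{lem2.3}) yields \eqref{eremone} directly as an identity, no lower-semicontinuity step being needed there since no squared-gradient term appears; this is recorded in Remark \ref{remone} and handled in parallel.
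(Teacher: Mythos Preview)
Your proposal is correct and follows essentially the same route as the paper's proof: reduce to verifying \eqref{e25} via Lemmas \ref{lem2.6}--\ref{lem2.8}, test the first equation in \eqref{e216} against $(u_\varepsilon+\varepsilon)^{m-2}\varphi$, and pass to the limit using the strong convergences \eqref{e229}, \eqref{e243} together with the weak convergence \eqref{e230}, invoking weak lower semicontinuity for the squared-gradient term to obtain the inequality.

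One small correction concerning the case $m=2$: you should not test with $\ln(u_\varepsilon+\varepsilon)\varphi$. That choice \emph{would} produce a squared-gradient term from the diffusion (namely $-2\int\!\int|\nabla u_\varepsilon|^2\varphi$), contrary to your claim. The correct test function is simply $\varphi$ itself---which is exactly $(u_\varepsilon+\varepsilon)^{m-2}\varphi$ when $m=2$---and this yields the approximate identity \eqref{e249} without any squared-gradient term; the limit passage to \eqref{eremone} then follows directly from \eqref{e229}, \eqref{e230} and Lemma \ref{lem2.9}, with no lower-semicontinuity step needed. This is precisely what the paper does.
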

\begin{proof}
In view of Lemmas \ref{lem2.6}, \ref{lem2.7} and \ref{lem2.8}, we only need to verify (\ref{e25}). To this end, we fix an arbitrary non-negative $\varphi\in C_0^\infty(\bar{\Omega}\times[0,\infty))$ and then obtain on multiplying the first equation in (\ref{e214}) by $(u_\varepsilon+\varepsilon)^{m-2}\varphi$ when $m\neq 2$
\begin{equation}\label{e244}
\begin{split}
I_1(\varepsilon)&:=\int_{0}^{\infty}\int_{\Omega}|\nabla (u_{\varepsilon}+\varepsilon)^{m-1}|^2\varphi\\
&=\frac{m-1}{m(m-2)}\int_{0}^{\infty}\int_{\Omega}(u_{\varepsilon}+\varepsilon)^{m-1}\varphi_{t}+\frac{m-1}{m(m-2)}\int_{\Omega}u_{0}^{m-1}\varphi(\cdot,0)\\
&\quad-\frac{m-1}{m-2}\int_{0}^{\infty}\int_{\Omega}(u_{\varepsilon}+\varepsilon)^{m-1}\nabla (u_{\varepsilon}+\varepsilon)^{m-1}\cdot\nabla\varphi\\
&\quad-\frac{m-1}{m}\int_{0}^{\infty}\int_{\Omega}\frac{u_{\varepsilon}}{u_{\varepsilon}+\varepsilon}\nabla (u_{\varepsilon}+\varepsilon)^{m-1}f'_{\varepsilon}(u_{\varepsilon})\nabla w_{\varepsilon}\varphi\\
&\quad-\frac{(m-1)^2}{m(m-2)}\int_{0}^{\infty}\int_{\Omega}\frac{u_{\varepsilon}}{u_{\varepsilon}+\varepsilon}(u_{\varepsilon}+\varepsilon)^{m-1}f'_{\varepsilon}(u_{\varepsilon})\nabla w_{\varepsilon}\cdot\nabla\varphi\\
&=:I_2(\varepsilon)+I_3(\varepsilon)+I_4(\varepsilon)+I_5(\varepsilon)+I_6(\varepsilon)
\end{split}
\end{equation}
for all $\varepsilon\in(0,1)$. Here choosing $T>0$ large enough such that $\varphi\equiv0$ on $\Omega\times(T,\infty)$, we obtain from (\ref{e230}) that $(u_\varepsilon+\varepsilon)^{m-1}\rightharpoonup u^{m-1}$ in $L^2((0,T);W^{1,2}(\Omega))$ as $\varepsilon=\varepsilon_k\searrow0$, which ensures that
\begin{equation}\label{e245}
I_2(\varepsilon)\to\frac{m-1}{m(m-2)}\int_{0}^{\infty}\int_{\Omega}u^{m-1}\varphi_{t}\ \ \text{and}\ \ I_4(\varepsilon)\to-\frac{m-1}{m-2}\int_{0}^{\infty}\int_{\Omega}u^{m-1}\nabla u^{m-1}\cdot\nabla\varphi
\end{equation}
as $\varepsilon=\varepsilon_k\searrow0$, and that
\begin{equation}\label{e246}
\int_{0}^{\infty}\int_{\Omega}|\nabla u^{m-1}|^2\varphi\leq \liminf\limits_{\varepsilon=\varepsilon_k\searrow0} I_1(\varepsilon).
\end{equation}
Moreover, using that $\nabla w_\varepsilon\to\nabla w$ in $L^2(\Omega\times(0,T))$ as $\varepsilon=\varepsilon_k\searrow0$ by Lemma \ref{lem2.9}, and that this combined with the observations that $0\leq\frac{u_{\varepsilon}f'_{\varepsilon}(u_{\varepsilon})}{u_{\varepsilon}+\varepsilon}\leq1$ for all $\varepsilon\in(0,1)$ and $\frac{u_{\varepsilon}f'_{\varepsilon}(u_{\varepsilon})}{u_{\varepsilon}+\varepsilon}\to1$ a.e. in $\Omega\times(0,T)$ as $\varepsilon=\varepsilon_k\searrow0$ warrants that
\begin{equation*}
\frac{u_{\varepsilon}f'_{\varepsilon}(u_{\varepsilon})}{u_{\varepsilon}+\varepsilon}\nabla w_\varepsilon\to\nabla w=-\nabla\ln v\ \ \text{in}\ L^2(\Omega\times(0,T))
\end{equation*}
as $\varepsilon=\varepsilon_k\searrow0$, we also find that
\begin{equation}\label{e247}
I_5(\varepsilon)\to\frac{m-1}{m}\int_{0}^{\infty}\int_{\Omega}(\nabla u^{m-1}\cdot\nabla\ln v)\varphi
\end{equation}
and
\begin{equation}\label{e248}
I_6(\varepsilon)\to\frac{(m-1)^2}{m(m-2)}\int_{0}^{\infty}\int_{\Omega}u^{m-1}\nabla\ln v\cdot\nabla\varphi
\end{equation}
as $\varepsilon=\varepsilon_k\searrow0$. Collecting (\ref{e245})-(\ref{e248}),we readily see that (\ref{e25}) results from (\ref{e244}).\\
When $m=2$, (\ref{e244}) should be replaced by the identity
\begin{equation}\label{e249}
\frac{1}{2}\int_{0}^{\infty}\int_{\Omega}(u_{\varepsilon}+\varepsilon)\varphi_{t}+\frac{1}{2}\int_{\Omega}u_{0}\varphi(\cdot,0)=\int_{0}^{\infty}\int_{\Omega}(u_{\varepsilon}+\varepsilon)\nabla (u_{\varepsilon}+\varepsilon)\cdot\nabla\varphi+\frac{1}{2}\int_{0}^{\infty}\int_{\Omega}u_{\varepsilon}f'_{\varepsilon}(u_{\varepsilon})\nabla w_{\varepsilon}\cdot\nabla\varphi.
\end{equation}
According to Lemmas \ref{lem2.6} and \ref{lem2.9}, we can easily see that (\ref{eremone}) results from (\ref{e249}).
\end{proof}

\textbf{Proof of Theorem 1.1.} The claim is an obvious consequence of Lemma \ref{lem2.10}.
$\hfill\Box$

\end{document}